\renewcommand{\le}{\leqslant}
\renewcommand{\ge}{\geqslant}
\numberwithin{equation}{section}
\theoremstyle{plain}
\newtheorem{theorem}{Theorem}[section]
\newtheorem*{theorem*}{Theorem}
\newtheorem{lemma}[theorem]{Lemma}
\theoremstyle{remark}
\theoremstyle{definition}
\newtheorem{definition}[theorem]{Definition}
\newcommand{\ds}{\displaystyle}
\newcommand{\N}{{\mathbb N}}
\def\F{\mathcal{F}}
\def \F{{\mathbb F}}
\begin{document}

\title{On the $k$-resultant modulus set problem on varieties\\ over finite fields}
\author{Minh Quy Pham\thanks{Department of Mathematics, Da Lat University, Vietnam. Email: p.minhquydl@gmail.com}}
\date{}
\maketitle

\begin{abstract}
    Let $V\subset \mathbb{F}_q^d$ be a \textit{regular} variety, $k\ge 3$ is an integer and $A\subseteq V$. Covert, Koh, and Pi (2017) proved the following generalization of the Erd\H{o}s-Falconer distance problem: If $|A|\gg  q^{\frac{d-1}{2}+\frac{1}{k-1}}$, then  we have 
\[\Delta_{k}(A)=\{|x_1+\cdots+x_k|\colon x_i\in A\}\supseteq \mathbb{F}_q^*.\]
In this paper, we provide improvements and extensions of their result.
\end{abstract}

\section{Introduction}
Let $\mathbb{F}_q$ be a finite field of odd prime power $q$. Let $S_j^{d-1}$ be the sphere centered at the origin of radius $j$ in $\mathbb{F}_q^d$. For any two points $x=(x_1,\dots, x_d)$ and $y=(y_1,\dots, y_d)$ in $S_j^{d-1}$, the distance between them is defined by 
\begin{align*}
    | x-y| =(x_1-y_1)^2+\cdots+ (x_d-y_d)^2.
\end{align*}
For $A\subset S_j^{d-1}$,  we use $\Delta_2(A)$ to denote the set of all distances determined by any two points in $A.$ Hart, Iosevich, Koh and Rudnev \cite{HIKR}, using discrete Fourier analysis, proved the following result.
\begin{theorem}
Let $A$ be a subset of $S_1^{d-1}$.
\begin{itemize}
    \item[(i)] If $| A| \ge C_1q^{\frac{d}{2}}$ with a sufficiently large constant $C_1$, then there exists $C_2>0$ such that $| \Delta_2(A)| \ge C_2q.$
    \item[(ii)] If $d$ is even and $| A|\ge C_1q^{\frac{d}{2}}$ with a sufficiently large constant $C_1$, then $\Delta_2(A)=\mathbb{F}_q$.
    \item[(iii)] If $d$ is even, there exist $C_1>0$ and $A\subset S_1^{d-1}$ such that $| A| \ge q^{\frac{d}{2}}$ and $\Delta_2(A)\neq \mathbb{F}_q$.
    \item[(iv)] If $d$ is odd and $| A|\ge C_1q^{\frac{d+1}{2}}$ with a sufficiently large constant $C_1>0$, then $\Delta_2(A)=\mathbb{F}_q$.
    \item[(v)] If $d$ is odd, there exist $C_1>0$ and $A\subset S_1^{d-1}$ such that $| A| \ge cq^{\frac{d+1}{2}}$ and $\Delta_2(A)\neq \mathbb{F}_q$.
\end{itemize}
\end{theorem}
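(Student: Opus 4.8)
Here is the plan I would follow to prove the theorem.

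The organising principle is that on a sphere distances \emph{are} dot products: for $x,y\in S_1^{d-1}$ one has $|x-y|=2-2(x\cdot y)$, so $\Delta_2(A)=\{\,2-2(x\cdot y):x,y\in A\,\}$ and, in particular, $0\in\Delta_2(A)$ automatically (it is the distance of a point to itself). Hence it suffices to show that $\nu(t):=|\{(x,y)\in A\times A:|x-y|=t\}|$ is positive for every $t\in\mathbb F_q^{*}$. Expanding the indicator of $\{|x-y|=t\}$ in additive characters and collapsing the resulting sum with the autocorrelation of $A$ (whose Fourier transform is $|\widehat A|^{2}$) gives the exact identity
\[
\nu(t)=\frac{|S_t^{d-1}|}{q^{d}}\,|A|^{2}+\frac1{q^{d}}\sum_{m\neq 0}\widehat{S_t^{d-1}}(m)\,|\widehat A(m)|^{2},
\]
valid for any $A\subseteq\mathbb F_q^{d}$; the sphere hypothesis on $A$ enters only afterwards. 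The other ingredient is the classical evaluation, by completing the square, $\widehat{S_j^{d-1}}(m)=q^{d-1}\mathbf 1_{m=0}+\frac{\mathfrak g^{d}}{q}\sum_{s\neq 0}\eta(s)^{d}\chi\!\big(-sj-\tfrac{|m|^{2}}{4s}\big)$, where $\mathfrak g$ is the quadratic Gauss sum ($|\mathfrak g|=\sqrt q$, $\mathfrak g^{2}=\eta(-1)q$): the inner sum is a Kloosterman sum for $d$ even and a Salié sum for $d$ odd, so by Weil's bound $|\widehat{S_j^{d-1}}(m)|\le 2q^{(d-1)/2}$ for $m\neq 0$, while on the isotropic cone $|m|^{2}=0$ it collapses to $-1$ ($d$ even) or a single Gauss sum of modulus $\sqrt q$ ($d$ odd). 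The decisive asymmetry is that for even $d$ the prefactor $\mathfrak g^{d}=\pm q^{d/2}$ is a \emph{real} scalar, so on isotropic frequencies $\widehat{S_t^{d-1}}(m)$ has size only $q^{d/2-1}$.

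Part (iv) is then immediate: substituting the uniform bound and Parseval $\sum_m|\widehat A(m)|^{2}=q^{d}|A|$ into the identity yields $|\nu(t)-|A|^{2}/q|\ll q^{(d-1)/2}|A|$, which is $<|A|^{2}/q$ once $|A|\gg q^{(d+1)/2}$, so every $t\in\mathbb F_q^{*}$ occurs. For (i) I would not argue pointwise but bound the distance energy $E(A):=\sum_t\nu(t)^{2}$. From $E(A)-|A|^{4}/q=q^{-1}\sum_{s\neq 0}\big|\sum_{x,y\in A}\chi(s|x-y|)\big|^{2}$, the factorisation $\sum_{x,y\in A}\chi(s|x-y|)=\chi(2s)\sum_{x\in A}\widehat A(2sx)$ coming from $|x-y|=2-2(x\cdot y)$, Cauchy--Schwarz, and the geometric fact that a line through the origin meets $S_1^{d-1}$ in at most two points (so the weights $|\widehat A(\xi)|^{2}$ are counted at most twice as $\xi$ runs over the lines $\mathbb F_q^{*}x$, $x\in A$), one obtains $E(A)\le |A|^{4}/q+O(q^{d-1}|A|^{2})$. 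Hence $|\Delta_2(A)|=|\mathrm{supp}\,\nu|\ge |A|^{4}/E(A)\gg q$ as soon as $|A|\gg q^{d/2}$, giving (i).

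The genuinely delicate statement is (ii): promoting ``a positive proportion of distances'' to ``all distances'' at the \emph{same} threshold $q^{d/2}$, which succeeds only for even $d$. The obstruction is the isotropic contribution to the error term: there $\widehat{S_t^{d-1}}(m)$ is small (size $q^{d/2-1}$, where the reality of $\mathfrak g^{d}$ is used) but is weighted by $\sum_{|m|^{2}=0}|\widehat A(m)|^{2}$, whose crude size $q^{d}|A|$ is too large; one is forced to control the number of ``null pairs'' $|\{(x,y)\in A^{2}:x\cdot y=1\}|$, which is a statement of essentially the same strength as the theorem itself, so the estimate must be closed self-consistently — by feeding back the energy bound of (i) together with the sum--product and hyperplane-averaging input of the paper. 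This is the step I expect to be the main obstacle. Finally, the sharpness claims (iii) and (v) are produced by explicit subvarieties of the quadric. For (iii), after a linear change of variables write $|x|=\sum_{j=1}^{d/2}u_{j}v_{j}$ (the case where $-1$ is a non-square is analogous) and take $A=\{(u,v)\in S_1^{d-1}:\ v=(1/u_{1},0,\dots,0),\ u_{1}\neq 0\}$; then $|A|\asymp q^{d/2}$ and $|x-x'|=2-\big(u_{1}/u_{1}'+u_{1}'/u_{1}\big)$, so $\Delta_2(A)=\{\,2-(t+t^{-1}):t\in\mathbb F_q^{*}\,\}$ misses $2-c$ for every $c$ with $c^{2}-4$ a non-square, i.e.\ a positive proportion of $\mathbb F_q$. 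For (v) one adapts this to odd $d$, using the extra isotropic coordinate available there to build a set of size $\asymp q^{(d+1)/2}$ whose dot products are confined to a proper (Kloosterman/norm-type) subset of $\mathbb F_q$.
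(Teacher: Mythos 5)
This theorem is not proved in the paper at all: it is quoted from Hart--Iosevich--Koh--Rudnev \cite{HIKR}, and the only proof-related content the paper records is the observation that on the unit sphere $|x-y|=2-2\,x\cdot y$, so the distance problem reduces to the dot-product problem. Your proposal follows that same Fourier-analytic route, and parts (i), (iii) and (iv) are essentially right: the energy bound $E(A)\le |A|^4/q+O(q^{d-1}|A|^2)$, obtained from the fact that a line through the origin meets $S_1^{d-1}$ in at most two points, gives (i) at the $q^{d/2}$ threshold; the uniform bound $|\widehat{S_t^{d-1}}(m)|\ll q^{-(d+1)/2}$ together with Parseval gives (iv); and the hyperbolic-coordinate set $v=(1/u_1,0,\dots,0)$ does yield $\Delta_2(A)=\{2-(t+t^{-1}):t\in\mathbb{F}_q^*\}$, which misses roughly half of $\mathbb{F}_q$ (though you only carry this out when the form $\sum x_i^2$ is split; the non-split even case needs a separate construction).

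The genuine gap is part (ii), and you flag it yourself: ``closing the estimate self-consistently by feeding back the energy bound of (i) together with the sum--product and hyperplane-averaging input'' is not an argument, and that missing input is precisely the content of the theorem. Worse, your diagnosis of where the difficulty sits is backwards. In the decomposition of $\nu(t)$, the isotropic frequencies are harmless even with the crude Parseval bound: their contribution is $\ll q^{d/2-1}|A|$, which is already dominated by the main term $|A|^2/q$ exactly when $|A|\gg q^{d/2}$. It is the \emph{non}-isotropic frequencies that contribute $\ll q^{(d-1)/2}|A|$ and force $|A|\gg q^{(d+1)/2}$; so the cone/off-cone split on the distance side cannot produce any even-$d$ gain. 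The actual mechanism must exploit that $A$ lies on the sphere, by passing to the dot-product count, whose frequency support is the union of punctured lines $\bigcup_{s\neq 0}sA$ (size $\approx q|A|\ll q^d$); making that work pointwise at the threshold $q^{d/2}$ requires a nontrivial bound on $\sum_{s\neq 0}\sum_{y\in A}|\widehat{1_A}(sy)|^2$, equivalently on $|\{(x,x',y)\in A^3:(x-x')\cdot y=0\}|$ --- the hyperplane-averaging estimate that is the real engine of \cite{HIKR}. Part (v) is likewise only a gesture. So as written the proposal establishes (i), (iv) and the split case of (iii), but not (ii) or (v).
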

The key idea in their proof is to reduce the distance problem to the dot product problem by using the fact that $| x-y| =2-2x\cdot y$, where $x\cdot y=x_1y_1+\cdots +x_dy_d$. This is equivalent to say that
\begin{align}\label{eqconnection}
    | \Delta_2(A)| =| \Pi_2(A)| :=| \{ x\cdot y: x,y\in A\}|.
\end{align}
However, if one wants to replace the sphere by a general variety, the connection (\ref{eqconnection}) does not hold in general, so a new approach is needed.

In the paper \cite{Co}, Covert, Koh, and Pi were able to obtain an extension for \textit{regular} varieties.
\begin{definition}
For $A\subseteq \mathbb{F}_q^d$, we write $\textbf{1}_A$ for the characteristic function of $A$. Let $P(x)\in \mathbb{F}_q[x_1,\dots,x_d]$ be a polynomial. The variety $V:=\{ x\in \mathbb{F}_q^d: P(x)=0\}$ is called a regular variety if the size of $V$ is around $q^{d-1}$ and \begin{align*}
    \big| \widehat{\textbf{1}_V}(m)\big|=\bigg| \frac{1}{q^d}\sum\limits_{x\in \mathbb{F}_q^d}\chi(-m\cdot x)\textbf{1}_V(x)\bigg| \ll q^{-\frac{d+1}{2}}, \forall m\in \mathbb{F}_q^d\setminus \textbf{0}.
\end{align*}
\end{definition}

Throughout this paper, we use the following notations: $X\ll Y$ means that there exists some absolute constant $C_1>0$ such that $X\le C_1Y$ and $X\sim Y$ means $Y\ll X\ll Y$.

Covert, Koh, and Pi asked the following question: For $k\ge 2$, how large does a subset $A$ in a regular variety $V$ need to be to make sure that $\Delta_k(A)=\mathbb{F}_q$ or $| \Delta_k(A)| \gg q$, where $\Delta_k(A):=\{ x^1+\cdots+x^k: x^i\in A, 1\le i\le k\}$?

They observed that 
\begin{align*}
    | \Delta_k(A)| =| \Pi_k(A)|:=\bigg| \bigg\{ \sum\limits_{i=1}^k\sum\limits_{j=1}^k a_{ij}\cdot b_{ij}\cdot x^i\cdot x^j: x^l\in A, 1\le l\le k\bigg\}\bigg|,
\end{align*}
with $a_{ij}=1$ if $i<j$ and $0$ otherwise, and $b_{ij}=1$ for $i=1$ and $-1$ otherwise. Unlike the case of spheres, this reduction does not help with general varieties.

Using a new approach, Covert, Koh, and Pi proved the following theorem
\begin{theorem}[\cite{Co}]\label{thmcovert}
Let $V\subset \mathbb{F}_q^d$ be a regular variety, $k\ge 3$ is an integer and $A\subseteq V$. If $| A|\gg q^{\frac{d-1}{2}+\frac{1}{k-1}}$, then we have
\begin{align*}
    \Delta_k(A)\supseteq \mathbb{F}_q^*\text{ for even } d\ge 2,
\end{align*}
and 
\begin{align*}
    \Delta_k(A)=\mathbb{F}_q \text{ for odd } d\ge 3.
\end{align*}
\end{theorem}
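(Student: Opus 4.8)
\section{Proof proposal}

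The plan is to count, via discrete Fourier analysis, the number of $k$-tuples $(x^1,\dots,x^k)\in A^k$ whose resultant modulus $|x^1+\cdots+x^k|$ equals a prescribed value $t\in\mathbb{F}_q^*$ (and $t=0$ when $d$ is odd), and show this count is strictly positive once $|A|\gg q^{(d-1)/2+1/(k-1)}$. Concretely, for $t\in\mathbb{F}_q$ set
\begin{align*}
N_k(t)=\sum_{x^1,\dots,x^k\in A}\mathbf{1}\big(|x^1+\cdots+x^k|=t\big),
\end{align*}
and expand the indicator of the sphere $S_t^{d-1}$ using the orthogonality of the additive character $\chi$: writing $\mathbf{1}(|y|=t)=q^{-1}\sum_{s\in\mathbb{F}_q}\chi(s(|y|-t))$ with $y=x^1+\cdots+x^k$, the $s=0$ term contributes the main term $|A|^k/q$, while the $s\neq0$ terms form the error. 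The standard Gauss-sum evaluation of $\sum_{y}\chi(s|y|)$ converts $\chi(s|x^1+\cdots+x^k|)$ into an expression involving $\chi$ of a quadratic form in the $x^i$, which after completing the square and applying the Fourier transform couples to $\widehat{\mathbf{1}_V}$ — this is where the regularity hypothesis $|\widehat{\mathbf{1}_V}(m)|\ll q^{-(d+1)/2}$ enters.

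The key steps, in order: (1) reduce $N_k(t)>0$ to bounding the error term $\mathcal{E}_k(t)=N_k(t)-|A|^k/q$; (2) rewrite $\mathcal{E}_k(t)$ as a sum over $s\in\mathbb{F}_q^*$ and over frequency variables $m^1,\dots,m^k$, pulling out one factor $\widehat{\mathbf{1}_A}$ or $\widehat{\mathbf{1}_V}$ per copy of $A$; (3) since $A\subseteq V$, estimate $k-1$ of the resulting $\widehat{\mathbf{1}_V}$ factors by the regularity bound $q^{-(d+1)/2}$ and keep one factor of $\mathbf{1}_A$ (or $\widehat{\mathbf{1}_A}$) to recover an honest power of $|A|$ via Plancherel, $\sum_m|\widehat{\mathbf{1}_A}(m)|^2=|A|/q^d$; (4) carry out the Gauss-sum bookkeeping over $s$, which produces the factor $\chi_2(s)$ (the quadratic character) and the normalization $q^{-1/2}$ per sum, and in the odd-$d$ case uses the fact that $\sum_{s\neq0}\chi_2(s)=0$ to make the $t=0$ case work as well; (5) assemble the bound $|\mathcal{E}_k(t)|\ll |A|\, q^{(k-1)(d-1)/2}\cdot q^{\text{(lower order)}}$ and check that $|A|^k/q$ dominates it precisely when $|A|\gg q^{(d-1)/2+1/(k-1)}$, i.e. when $|A|^{k-1}\gg q^{(k-1)(d-1)/2+1}$.

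The main obstacle I anticipate is step (3)–(4): correctly organizing the iterated character sum so that exactly one copy of $A$ is spared from the $\widehat{\mathbf{1}_V}$ estimate and contributes the full $|A|$, rather than $|A|/q^d$ or $\sqrt{|A|}$, since a lossy bookkeeping here would degrade the exponent $1/(k-1)$. One must be careful that the $k$ summation variables $x^1,\dots,x^k$ enter the quadratic form $|x^1+\cdots+x^k|$ symmetrically but the cross terms $x^i\cdot x^j$ create a genuinely $k$-fold coupled sum; the trick is to peel off the variables one at a time, at each stage Fourier-expanding in the "new" variable and applying the regularity bound, so that after $k-1$ peels one is left with a single clean sum over the last variable that is controlled by $|A|$ directly. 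A secondary technical point is the even-versus-odd $d$ dichotomy: the Gauss sum $\sum_y\chi(s|y|)$ equals $\eta(s)^d q^{d/2}$ times a fourth root of unity with $\eta$ the quadratic character, so for odd $d$ one gets an extra $\eta(s)$ that both enables the $t=0$ conclusion and must be tracked through the triangle-inequality estimates without loss.
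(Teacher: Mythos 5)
First, a point of reference: the paper does not prove Theorem \ref{thmcovert} --- it is quoted from Covert--Koh--Pi \cite{Co} and used as a black box --- so your sketch can only be measured against the argument in \cite{Co} and on its own internal logic. Your architecture (count $N_k(t)$ for each fixed $t$, isolate the main term $|A|^k/q$, beat the error pointwise in $t$) is the right one, and your final arithmetic is consistent: an error of order $|A|\,q^{(k-1)(d-1)/2}$ does yield the threshold $q^{(d-1)/2+1/(k-1)}$. The genuine gap is in your steps (3)--(4), and it is precisely the crux of the whole problem. After expanding the sphere indicator, each of the $k$ copies of $A$ contributes a factor $\sum_{x^i\in A}\chi(\cdots)=q^d\,\overline{\widehat{\mathbf{1}_A}}(\cdots)$, \emph{not} a factor of $\widehat{\mathbf{1}_V}$. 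The inclusion $A\subseteq V$ does not allow you to replace $\widehat{\mathbf{1}_A}$ by $\widehat{\mathbf{1}_V}$, nor does it transfer the decay $|\widehat{\mathbf{1}_V}(m)|\ll q^{-(d+1)/2}$ to $\widehat{\mathbf{1}_A}$: Fourier transforms are not monotone under set inclusion, and the bound $\max_{m\ne 0}|\widehat{\mathbf{1}_A}(m)|\ll q^{-(d+1)/2}$ is simply false in general (take $A=V\cap H$ for a hyperplane $H$; at the normal frequency $|\widehat{\mathbf{1}_A}(m)|=|A|/q^d\sim q^{-2}$, far larger than $q^{-(d+1)/2}$ once $d\ge 4$). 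If one runs your bookkeeping --- $k-2$ factors estimated pointwise by $q^{-(d+1)/2}$ and the remaining two absorbed by Plancherel --- the threshold $q^{(d-1)/2+1/(k-1)}$ corresponds \emph{exactly} to needing this false decay for $\widehat{\mathbf{1}_A}$. (A smaller inconsistency: ``keep one factor of $\mathbf{1}_A$ \dots via Plancherel $\sum_m|\widehat{\mathbf{1}_A}(m)|^2=|A|/q^d$'' does not parse, since Plancherel consumes two Fourier factors, not one.)

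The missing idea is a positivity or Cauchy--Schwarz mechanism that legitimately enlarges sums over $A$ to sums over $V$ \emph{before} the character cancellation is exploited --- for instance $\bigl|\sum_{x\in A}g(x)\bigr|^2\le |A|\sum_{x\in V}|g(x)|^2$, applied iteratively to peel off variables --- so that the regularity hypothesis is only ever applied to honest exponential sums over $V$. Organizing this so that the losses from the repeated Cauchy--Schwarz applications land on the exponent $\frac{1}{k-1}$ is the actual content of the ``new approach'' of \cite{Co}; your ``peeling'' paragraph names the right intuition but supplies no mechanism for converting $A$-sums into $V$-sums, and as written steps (3)--(5) do not go through. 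Your remarks on the even/odd dichotomy and the $t=0$ case are fine but moot until this central step is repaired.
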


This result gives us the important information that when $k$ is very large, in order to have $| \Delta_k(A)| \ge q-1$, one only needs the size of $A$ being around $q^{\frac{d-1}{2}}$. 

The main purpose of this paper to provide a variant of Theorem \ref{thmcovert} for a large family of varieties $V$ in which we only require some conditions on its dimensions and maximal affine subspaces in $V$, instead of the Fourier decay.

Our main results are as follows.

\begin{theorem}\label{thm1}
Let $d\ge 2$ and $V\subset \mathbb{F}_q^d$ be a variety which has dimension $n=\dim V\ge \frac{d+1}{2}$. Let $A$ be a subset of $V$ and $k\ge 3$ be an integer. There exists $\epsilon>0$ such that if $| A| \gg q^{\frac{d+1}{2}-\epsilon}$, then we have
    \begin{align*}
       | \Delta_k(A)| \gg q.
   \end{align*}
\end{theorem}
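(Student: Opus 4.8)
The plan is to combine a Cauchy--Schwarz second-moment bound with discrete Fourier analysis over $\mathbb F_q$ — completing the square in the quadratic form $|\cdot|$ by Gauss sums — so as to reduce the assertion to an upper bound for a ``spherical'' Fourier moment of $\mathbf 1_A$, and then to prove that moment bound by separating the case where $A$ is Fourier-spread (handled directly by the bound) from the case where $A$ concentrates on a low-dimensional affine subspace of $\mathbb F_q^d$ (handled by descending to that subspace and quotienting by the radical of the restricted quadratic form).

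\textbf{The reduction.} For $t\in\mathbb F_q$ put $\nu(t)=\#\{(x^1,\dots,x^k)\in A^k:|x^1+\cdots+x^k|=t\}$. Since $\sum_t\nu(t)=|A|^k$, Cauchy--Schwarz gives $|\Delta_k(A)|\ge |A|^{2k}/\sum_t\nu(t)^2$, so it is enough to prove $\sum_t\nu(t)^2\ll |A|^{2k}/q$. Writing $\nu(t)=\sum_{|u|=t}r_k(u)$ with $r_k=\mathbf 1_A*\cdots*\mathbf 1_A$ ($k$ factors) and using $\mathbf 1[|u|=|v|]=q^{-1}\sum_s\chi(s(|u|-|v|))$, one splits off the diagonal term $|A|^{2k}/q$ and is left with $q^{-1}\sum_{s\ne 0}|R_s|^2$, where $R_s=\sum_u r_k(u)\chi(s|u|)$. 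Completing the square in $|\cdot|$ (a product of $d$ one-variable Gauss sums, each of modulus $q^{1/2}$) and then expanding the square and applying orthogonality in $s$ once more, the error term is controlled by $q^{1-d}\sum_t\bigl|\sum_{|m|=t}\widehat{\mathbf 1_A}(m)^k\bigr|^2$, with $\widehat{\mathbf 1_A}(m)=\sum_{a\in A}\chi(-m\cdot a)$. Thus everything comes down to establishing
\[
\sum_{t\in\mathbb F_q}\ \Bigl|\ \sum_{|m|=t}\widehat{\mathbf 1_A}(m)^k\ \Bigr|^2\ \ll\ q^{d-1}\,|A|^{2k},
\]
which I denote $(\star)$.

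\textbf{Proving $(\star)$.} In $(\star)$ the term $m=0$ only enters the summand $t=0$ and contributes $O(|A|^{2k})$, admissible since $q^{d-1}\ge q$. For the rest I would dichotomize on $M:=\max_{m\ne0}|\widehat{\mathbf 1_A}(m)|$. If $M\le q^{-\delta}|A|$ for a suitably chosen $\delta=\delta(d,k)>0$: bounding each inner sum by Cauchy--Schwarz over a sphere, $\bigl|\sum_{|m|=t}\widehat{\mathbf 1_A}(m)^k\bigr|^2\ll q^{d-1}M^{2k-2}\sum_{|m|=t}|\widehat{\mathbf 1_A}(m)|^2$, and summing over $t$ with Parseval $\sum_m|\widehat{\mathbf 1_A}(m)|^2=q^d|A|$ yields a bound $\ll q^{2d-1}M^{2k-2}|A|$; this is $\ll q^{d-1}|A|^{2k}$ provided $|A|\gg q^{\,d-2\delta(k-1)}$, which holds once $|A|\gg q^{(d+1)/2-\epsilon}$ and $2\delta(k-1)\ge\tfrac{d-1}{2}+\epsilon$. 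If instead $M>q^{-\delta}|A|$, then $A$ has a large linear Fourier bias; an iterated pigeonholing — each step costing only a $q^{-O(\epsilon)}$ loss in density — forces $A$ to be essentially contained in an affine subspace $W$, necessarily with $\dim W\ge\log_q|A|\ge (d+1)/2-\epsilon$, so $\dim W\ge (d+1)/2$ once $\epsilon<\tfrac12$. Since $\dim W>d/2$, the form $|\cdot|$ is not totally isotropic on $W$; passing to the quotient of $W$ by the radical of the quadratic part of the restriction of $|\cdot|$ to $W$ — of dimension $r\le d-\dim W\le (d-1)/2$ — one lands in a space of dimension $m'=\dim W-r\ge1$ carrying a nondegenerate form, on which $\Delta_k(A)$ descends to the $k$-fold sum-set modulus set of the projection $\overline A$, with $|\overline A|\ge |A|/q^{r}\gg q^{\,m'-\epsilon}$. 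Since $m'-\epsilon>m'/2$, this lower-dimensional instance is above threshold, and induction on $d$ (with the classical sum-set Erd\H os--Falconer estimate serving as the non-degenerate base case) finishes.

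\textbf{Main obstacle.} The Gauss-sum computation, the point counts on the spheres $\{|m|=t\}$, and the Cauchy--Schwarz bookkeeping are routine. The real work is the second branch: converting a large Fourier bias into honest concentration on a low-dimensional affine subspace, tracking the density losses through the $O(d)$ descent steps so that $A$ is never depleted below the usable size, and — the delicate point — excluding the degenerate configurations in which $A$ collapses onto a subspace aligned with the isotropic directions of $|\cdot|$ (e.g.\ onto a subspace on which $|\cdot|$ has small rank while $A$ sits transversally to the radical). It is precisely here that the hypotheses on $V$ — its dimension together with, in the general form of the theorem, a constraint on the affine subspaces it contains — are used in place of the Fourier decay that defines a regular variety, and it is this accounting that fixes the admissible range of $\epsilon$ as a sufficiently small constant depending on $d$ and $k$ (in particular $\epsilon<\tfrac12$).
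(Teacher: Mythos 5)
Your Fourier reduction to the moment bound $(\star)$ is plausible in outline, but the argument breaks down at the step you yourself flag as ``the real work'': the claim that a large Fourier bias forces $A$ to concentrate on a low-dimensional affine subspace. Note first that your spread case requires $\delta\ge\frac{d-1+2\epsilon}{4(k-1)}$, so in the biased case you only know $\max_{m\ne 0}|\widehat{\mathbf 1_A}(m)|>q^{-\delta}|A|$ for a \emph{fixed constant} $\delta$ of order $d/(4(k-1))$. Such a bias is far too weak to yield containment in an affine subspace: a set spread uniformly over $\sim q^{1-2\delta}$ parallel hyperplanes $\{m\cdot x\in T\}$ already exhibits a Fourier coefficient of that size, yet no hyperplane coset captures more than a $q^{2\delta-1}$ fraction of it. Quantitatively, a bias of size $q^{-\delta}|A|$ gives, via $\sum_c|A\cap\{m\cdot x=c\}|^2\ge q^{-1}(|A|^2+|\widehat{\mathbf 1_A}(m)|^2)$, a densest coset of size only about $\frac{|A|}{q}(1+q^{-2\delta})$ --- a loss of a factor $\sim q$ in cardinality per descent step, not the $q^{-O(\epsilon)}$ loss your iteration needs. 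After $O(d)$ steps the set is depleted entirely, so the dichotomy does not close. A second, related gap: the hypothesis $A\subseteq V$ with $\dim V\ge\frac{d+1}{2}$ never enters your argument quantitatively --- you defer it to precisely the branch that is unproved --- whereas it is the engine of the actual proof.

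For comparison, the paper's route avoids Fourier analysis of $A$ altogether. It invokes Shkredov's energy estimates for subsets of varieties (Theorem \ref{thmenvarsh}), whose dichotomy is massaged into the uniform bound $E_l(A)\ll|A|^{2l-1-c\beta/4}$ for all $l\ge2$; this power saving over the trivial energy bound is exactly where the hypothesis $\dim V\ge\frac{d+1}{2}$ is consumed. Lemma \ref{cardak} then converts the energy saving into sumset growth, $|A_l|\ge|A|^{1+c\beta/4}$, so that $|A_l||A_{k-l}|\ge|A|^{2+c\beta/2}\gg q^{d+1}$ once $|A|\gg q^{\frac{d+1}{2}-\epsilon}$ with $\epsilon=\frac{(d+1)c\beta}{2(4+c\beta)}$, and Shparlinski's two-set distance theorem (Lemma \ref{twosets}) applied to $\Delta_k(A)=\Delta_2(A_l,A_{k-l})$ finishes. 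If you want to salvage your approach, the missing ingredient is a substitute for that energy input; without it the structure-versus-randomness dichotomy, as you have set it up, cannot be completed.
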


When $3\le k\le 4$, explicit exponents are obtained in the next theorem.

\begin{theorem}\label{thm2}
 Let $d\ge 2$ and $V\subset \mathbb{F}_q^d$ be a variety which has dimension $n=\dim V\ge \frac{d+1}{2}$. Set $t_V=\max \{ | H|: H \text{ is an affine subspace that lies in }V\}$.\\
 Suppose $t_V=q^\alpha$, $0\le \alpha\le \frac{d+1}{2}$, $\gamma=\big(2^{n+1}-n-5)^{-1}$ and let $A$ be a subset of $V$. 
 \begin{enumerate}
     \item[(i)]    If  $| A| \gg q^{\frac{d+1}{2}-\frac{\gamma((d+1)-2\alpha)}{2(2+\gamma)}}$, then
     \begin{align*}
         | \Delta_3(A)| \gg q.
     \end{align*}
     \item[(ii)] If $| A| \gg q^{\frac{d+1}{2}-\frac{\gamma((d+1)-2\alpha)}{2(1+\gamma)}}$, then
    \begin{align*}
        |\Delta_4(A)| \gg q.
    \end{align*}
 \end{enumerate}
\end{theorem}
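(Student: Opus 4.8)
The plan is to set up a counting/energy argument à la the Covert–Koh–Pi machinery, but replacing the Fourier-decay input by a combinatorial incidence bound that is sensitive to $t_V$. For a fixed $r\in\mathbb{F}_q^*$, let
\[
\nu_k(r)=\#\bigl\{(x^1,\dots,x^k)\in A^k:\ |x^1+\cdots+x^k|=r\bigr\},
\]
and similarly write $\nu_k(0)$ for the count with $r=0$ excluded or included as the statement requires. Since $\sum_{r}\nu_k(r)=|A|^k$, Cauchy–Schwarz gives $|\Delta_k(A)|\ge |A|^{2k}/\sum_r \nu_k(r)^2$, so it suffices to show $\sum_r \nu_k(r)^2\ll |A|^{2k}/q$ whenever $|A|$ exceeds the stated threshold. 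The quantity $\sum_r\nu_k(r)^2$ is a $2k$-fold point count which, after introducing the quadratic form $Q(z)=z_1^2+\cdots+z_d^2$ and opening it up, becomes a weighted incidence count between $A^{k}$ and level sets of $Q$. The first step is to massage this into a bound of the shape $\sum_r\nu_k(r)^2 \le q^{-1}|A|^{2k} + (\text{error})$, where the error term is controlled by the number of $k$-tuples from $A$ summing into a fixed hyperplane-type or sphere-type set.

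Second, I would invoke the structural hypotheses on $V$. The dimension bound $n=\dim V\ge \frac{d+1}{2}$ together with the parameter $t_V=q^\alpha$ should feed into an estimate for the ``additive energy'' type quantity
\[
E_j(A)=\#\bigl\{(x^1,\dots,x^j)\in A^j:\ x^1+\cdots+x^j \in Z\bigr\}
\]
for $Z$ a sphere or affine subspace: roughly one expects $E_j(A)\ll |A|^j/q + |A|^{j-1}t_V^{?} + \cdots$, where the second term reflects the worst case that $A$ concentrates on the largest affine subspace $H\subset V$. The exponent $\gamma=(2^{n+1}-n-5)^{-1}$ is the tell-tale sign that the argument proceeds by an iterated doubling/tensor-power inductive scheme on $j$ (so that the ``$2^{n+1}$'' comes from $n$ successive squarings), each step of which loses a controlled power and is stopped once the gain outpaces the error. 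I would run this induction to obtain a clean bound $E_{k-1}(A)$ or $E_k(A)$ in terms of $|A|$, $q$, and $q^\alpha$.

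Third, I would combine the two ingredients: plug the energy bound from step two into the decomposition from step one, and solve the resulting inequality for the threshold on $|A|$. For $k=3$ the dominant error term is cubic in the relevant parameters and balancing it against $|A|^{6}/q$ produces the exponent $\frac{d+1}{2}-\frac{\gamma((d+1)-2\alpha)}{2(2+\gamma)}$; for $k=4$ one extra summation variable is ``free'' (it can range over all of $V$, contributing a factor $\sim q^{n}\le q^{(d+1)/2}$ harmlessly), which shifts the denominator from $2(2+\gamma)$ to $2(1+\gamma)$. The case analysis $k=3$ versus $k=4$ is precisely where the explicit exponents diverge, and writing out both balancings carefully is the bookkeeping core of the proof.

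The main obstacle I anticipate is step two: getting a genuinely useful upper bound on the restricted energy $E_j(A)$ that degrades gracefully in $t_V$ rather than trivially. A variety of dimension $\ge\frac{d+1}{2}$ need not be ``pseudorandom'' in any Fourier sense, so one cannot import the clean exponential-sum estimates of \cite{Co}; instead the bound must come from an honest incidence/covering argument that isolates the contribution of large flats in $V$, and it is the interplay between $\dim V$ (which controls the total size and the number of directions) and $t_V$ (which controls the worst-case concentration) that makes the inductive step delicate. Controlling the accumulation of constants and lower-order terms across the $\sim n$ iterations — and verifying that the induction actually closes for all $n\ge\frac{d+1}{2}$ — is the technical heart of the matter.
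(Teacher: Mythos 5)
Your proposal has a genuine gap at the step you yourself identify as the ``technical heart'': the energy bound for subsets of a variety in terms of $t_V$. In the paper this is not proved but imported wholesale from Shkredov (Corollary 9 of \cite{Sh21}), which gives $E(A)\ll_{n,D}|A|^3\bigl(t_V/|A|\bigr)^{\gamma}$ with exactly the exponent $\gamma=(2^{n+1}-n-5)^{-1}$; your guess that such a bound ``should'' follow from an iterated doubling scheme is a plausible description of how Shkredov proves it, but you have not carried out any such induction, and without that estimate nothing in your argument closes. Moreover, the rest of your architecture diverges from what is actually needed. The paper never touches the second moment $\sum_r\nu_k(r)^2$ of the distance-counting function (which for a general variety with no Fourier decay would itself be a nontrivial object); instead it writes $\Delta_3(A)=\Delta_2(A,A+A)$ and $\Delta_4(A)=\Delta_2(A+A,A+A)$, lower-bounds $|A+A|\ge |A|^4/E(A)$ by Cauchy--Schwarz (Lemma \ref{cardak}), feeds in Shkredov's bound to get $|A+A|\gg |A|^{1+\gamma}q^{-\alpha\gamma}$, and then applies Shparlinski's two-set distance theorem (Theorem \ref{twosets}), which only requires $|B||C|\gg q^{d+1}$ for arbitrary $B,C\subset\mathbb{F}_q^d$. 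This last step is what absorbs all the Fourier-analytic difficulty for general varieties, and your plan has no substitute for it.

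Your explanation of the $k=3$ versus $k=4$ discrepancy is also incorrect. The denominator drops from $2(2+\gamma)$ to $2(1+\gamma)$ not because an ``extra summation variable ranges freely over $V$,'' but because for $k=4$ both sets in the decomposition $\Delta_2(A+A,A+A)$ are doubled sumsets, so the energy gain $|A|^{\gamma}q^{-\alpha\gamma}$ is harvested twice, i.e.\ $|A_2|^2\gg|A|^{2+2\gamma}q^{-2\alpha\gamma}$, versus only once in $|A|\cdot|A_2|\gg|A|^{2+\gamma}q^{-\alpha\gamma}$ for $k=3$. Balancing each against $q^{d+1}$ yields the two stated thresholds. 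To repair your write-up: replace the second-moment framework with the sumset decomposition, cite (or prove) the Shkredov energy bound, and invoke the two-set distance theorem; as it stands the proposal identifies the right parameters but proves none of the three ingredients.
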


When $V$ is a sphere with nonzero radius, we are able to obtain better results, which are also improvements of Theorem \ref{thmcovert}. 

\begin{theorem}[even dimensions]\label{thm3}
Suppose $d\ge 4$ be an even integer. Assume $j\neq 0$ and $A$ be a subset of $S_j^{d-1}$.
\begin{itemize}
    \item[(i)] For any $k>3$, there exists $C_k>0$ such that if $| A|\ge C_k q^{\frac{d-1}{2}+\frac{1}{4(k-2)}}$, then 
\begin{align*}
    | \Delta_k(A)| \gg q.
\end{align*}
    \item[(ii)] There exists $C_3>0$ such that if $| A|\ge C_3q^{\frac{d}{2}-\frac{1}{4}}$, then 
\begin{align*}
    | \Delta_3(A)| \gg q.
\end{align*}
\end{itemize}
\end{theorem}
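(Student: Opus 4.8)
\emph{Proof proposal.} The plan is to run a second-moment argument and feed into it the arithmetic special to spheres. For $t\in\mathbb{F}_q$ set $\nu(t)=\#\{(x^1,\dots,x^k)\in A^k:\ |x^1+\cdots+x^k|=t\}$, so $\sum_t\nu(t)=|A|^k$ and Cauchy--Schwarz gives $|\Delta_k(A)|\ge |A|^{2k}\big/\sum_t\nu(t)^2$; hence it suffices to prove $\sum_t\nu(t)^2\ll |A|^{2k}/q$. The sphere enters through the identity, valid for $x^1,\dots,x^k\in S_j^{d-1}$ (using $|x^i|=j$ and $x^i\cdot x^l=j-\tfrac12|x^i-x^l|$),
\[|x^1+\cdots+x^k|=kj+2\sum_{1\le i<l\le k}x^i\cdot x^l=k^2j-\sum_{1\le i<l\le k}|x^i-x^l|,\]
so, after the affine relabelling $t\mapsto k^2j-t$, the quantity $\sum_t\nu(t)^2$ is the number of $(\uple{x},\uple{y})\in A^{2k}$ with $\sum_{i<l}x^i\cdot x^l=\sum_{i<l}y^i\cdot y^l$; the growing number of free summands is the heuristic reason the threshold should drop as $k$ grows.

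Opening this count by additive characters, with $\mathcal{S}(s)=\sum_{\uple{x}\in A^k}\chi\big(s\sum_{i<l}x^i\cdot x^l\big)$, gives $\sum_t\nu(t)^2=q^{-1}\sum_s|\mathcal{S}(s)|^2$; the term $s=0$ contributes exactly $|A|^{2k}/q$, so everything reduces to the on-average square-root-cancellation bound $\sum_{s\ne 0}|\mathcal{S}(s)|^2\ll |A|^{2k}$. To estimate $\mathcal{S}(s)$ for $s\ne0$ I would eliminate the variables $x^i$ successively by completing the square, turning each sum over $x^i\in A$ into a value of $\widehat{\textbf{1}_A}$ at an argument of the form $-s\cdot(\text{partial sum of the remaining }x^l)$; here one uses $A\subseteq S_j^{d-1}$ and the \emph{explicit} Fourier transform of the sphere (Gauss sums, and Kloosterman sums in the variable $s$), which is what makes this argument outperform the one for general regular varieties. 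Evenness of $d$ makes the Gauss-sum factors reduce to real constants (and keeps the relevant $s$-sums from collapsing), and $j\ne0$ keeps those Kloosterman-type sums nondegenerate, hence $O(q^{1/2})$.

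After these reductions the off-diagonal is governed by Plancherel, $\sum_m|\widehat{\textbf{1}_A}(m)|^2=|A|/q^d$, together with an $L^4$ (restriction) estimate for $S_j^{d-1}$ --- equivalently a bound $\#\{(a,b,c,d)\in A^4:\ a+b=c+d\}\ll |A|^2q^{d-2}$ for subsets of the sphere, which is available for even $d$ and $j\ne0$ and is where these hypotheses genuinely bite. For $k\ge4$ one keeps two of the $k$ Fourier factors for Plancherel and H\"older-interpolates the other $k-2$ against the $L^4$ bound; balancing the powers of $q$ forces $|A|\gg q^{\frac{d-1}{2}+\frac1{4(k-2)}}$ --- the ``$4$'' from the $L^4$ estimate, the ``$k-2$'' from the interpolated factors. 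For $k=3$ only one factor is spare, so the interpolation is unavailable; one instead applies the $L^4$ bound directly, preceded by one further Cauchy--Schwarz, and the weaker balance gives $|A|\gg q^{\frac d2-\frac14}$ (formally the $k=3$ value of the exponent in part (i), but argued separately). The hypothesis $d\ge4$ keeps all the $q$-power gaps pointing the right way.

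The step I expect to be the main obstacle is the contribution of the dual variable on the isotropic cone $\{m:\ |m|=0\}$, which carries $\sim q^{d-1}$ points: a crude Cauchy--Schwarz there loses a factor $q^{(d-1)/2}$ and ruins the exponent, so one must exhibit cancellation inside the cone sum. The way out is again $|a-b|=2j-2a\cdot b$ for $a,b\in S_j^{d-1}$, which converts the cone part into a nondegenerate exponential sum in the variable dual to $S_j^{d-1}$ itself, killed once more by the sphere's Fourier decay; this is also why structured subsets lying in isotropic affine subspaces of the sphere (for which $\Delta_k(A)$ can be a single point) are automatically ruled out at the hypothesised size of $A$. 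A secondary, more bookkeeping-type difficulty is proving the $L^4$/energy estimate with an admissible constant and then verifying that the exponent arithmetic closes for every $k\ge3$; tracking the $k$-dependence through the H\"older step is the part most likely to need care.
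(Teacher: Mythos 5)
Your plan is a genuinely different route from the paper's --- a Fourier/restriction-theoretic second moment in place of the paper's elementary chain (energy iteration, the bound $|A_{l}|\ge |A|^{2l}/E_l(A)$ of Lemma \ref{cardak}, and the asymmetric two-set distance theorem of Koh--Sun) --- but as written it has a genuine gap: none of the decisive estimates are carried out, and the one quantitative reduction you do make is to a statement that is strictly stronger than the theorem and is not known to hold at the claimed thresholds. Concretely, $\sum_t\nu(t)^2\ge |A|^{2k}/q$ always, so your target $\sum_t\nu(t)^2\ll |A|^{2k}/q$ asserts near-perfect equidistribution of the multiplicity function $\nu$; you give no derivation that $\sum_{s\ne 0}|\mathcal S(s)|^2\ll|A|^{2k}$ at $|A|\sim q^{\frac{d-1}{2}+\frac{1}{4(k-2)}}$, and the exponent is obtained by numerology (``the $4$ from the $L^4$ estimate, the $k-2$ from the interpolated factors'') rather than by tracking the powers of $q$ through the proposed H\"older step. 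You yourself flag the two places where the argument could die (the isotropic cone in the dual variable, and closing the exponent arithmetic for all $k$), and neither is resolved. A smaller but real technical point: ``completing the square'' in $x^1$ does not apply to the phase $s\sum_{i<l}x^i\cdot x^l$, since there are no diagonal terms $|x^i|$ in the phase (they are constant on $S_j^{d-1}$); what you actually get is $q^d\widehat{\textbf 1_A}\bigl(-s(x^2+\cdots+x^k)\bigr)$ times a residual phase still coupling $x^2,\dots,x^k$, and iterating this recursion without loss is exactly where the difficulty sits.

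For comparison, the paper sidesteps all of this: it writes $\Delta_k(A)=\Delta_2(A,A_{k-1})$, lower-bounds $|A_{k-1}|\ge|A|^{2(k-1)}/E_{k-1}(A)$ using the sphere energy bound $E(A)\ll|A|^3/q+q^{(d-2)/2}|A|^2$ together with the induction $E_l(A)\ll q^{d-1}E_{l-1}(A)+|A|^{2l-1}/q$, and then invokes the Koh--Sun theorem, whose \emph{asymmetric} hypotheses ($|B|\ge 2q^{(d+1)/2}$ and $|A||B|\ge 16q^d$ with $|A|$ allowed down to $q^{(d-1)/2}$) are precisely what let $|A|$ drop below the $q^{(d+1)/2}$ barrier while $A_{k-1}$ absorbs the size requirement. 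That asymmetry, which produces the $\frac{1}{4(k-2)}$ in the exponent, has no counterpart in your symmetric second-moment setup; recovering it is the missing idea. If you want to pursue your route, you would in effect have to reprove the $L^4$/energy inputs the paper already cites and then still handle the weighted two-set second moment, where the diagonal contribution $|A|\,E_{k-1}(A)$ and the zero-radius sphere must be excised separately --- which is essentially what Koh--Sun already packaged.
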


Recall that an element $g\in \mathbb{F}_q$ is called a primitive element if $g$ is a generator of the group $\mathbb{F}_q^*$. We have the following theorem in odd dimensions.

\begin{theorem}[odd dimensions]\label{thm4}
Suppose $d\ge 3$ be an odd integer, $d=4l+1,l\in \N$ or $d=4l-1$, $q=1\mod 4$. Assume $j$ is a primitive element in $\F_q^\ast$, and $A$ be a subset of $ S_j^{d-1}$.
\begin{itemize}
    \item[(i)] For any $k>3$, there exists $C_k'>0$ such that if  $| A|\ge C_k' q^{\frac{d-1}{2}+\frac{1}{4(k-\frac{3}{2})}}$, then
\begin{align*}
    | \Delta_k(A)| \gg q.
\end{align*}
\item[(ii)] There exists $C_3'>0$ such that if $| A|\ge C_3' q^{\frac{d}{2}-\frac{1}{3}}$, then
\begin{align*}
    | \Delta_3(A)| \gg q.
\end{align*}
\end{itemize}
\end{theorem}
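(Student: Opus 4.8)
The plan is to follow the same philosophy as in the even-dimensional case (Theorem \ref{thm3}): reduce the study of $\Delta_k(A)$ on a sphere $S_j^{d-1}$ to a dot-product-type incidence count, and then iterate a ``pivot'' estimate that peels off one summand at a time. Concretely, I would first set up a counting function $\nu_k(r)=\#\{(x^1,\dots,x^k)\in A^k:\ |x^1+\cdots+x^k|=r\}$, expand $\nu_k(r)$ via the additive character $\chi$ over $\F_q$, and isolate the main term $|A|^k/q$. The goal is to show that the $\ell^2$-mass (or the appropriate moment) of the error $\nu_k(r)-|A|^k/q$ is $o(|A|^{2k}/q)$, which forces $|\Delta_k(A)|\gg q$. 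The arithmetic input specific to odd $d$ is the Gauss-sum evaluation of $\widehat{\mathbf 1_{S_j^{d-1}}}$: when $d=4l+1$, or when $d=4l-1$ and $q\equiv 1\bmod 4$, the relevant quadratic Gauss sum factor is a ``nice'' (real, controlled-sign) quantity, and the hypothesis that $j$ is a primitive element guarantees that the exceptional frequencies $m$ with $|m|$ lying in a prescribed square class do not cause the character sum $\sum_{x\in S_j^{d-1}}\chi(m\cdot x)$ to spike; this is exactly the role the parity/residue conditions play.

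The core of the argument is a one-step inequality of the form
\begin{align*}
\sum_{r\in\F_q}\Big(\nu_{k}(r)-\tfrac{|A|^{k}}{q}\Big)^2 \ \ll\ q^{1/2}\,|A|\cdot \sum_{r\in\F_q}\Big(\nu_{k-1}(r)-\tfrac{|A|^{k-1}}{q}\Big)^2 \ +\ (\text{diagonal terms}),
\end{align*}
obtained by writing $x^1+\cdots+x^k=(x^1+\cdots+x^{k-1})+x^k$, applying Fourier inversion in the last variable, and using the Fourier decay $|\widehat{\mathbf 1_{S_j^{d-1}}}(m)|\ll q^{-d/2}$ together with the sphere point count $|S_j^{d-1}|\sim q^{d-1}$. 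Unwinding the recursion from the base case $k=2$ (where one uses the classical dot-product bound on the sphere, which in odd dimensions with $j$ primitive is sharp) produces the threshold $|A|\gg q^{\frac{d-1}{2}+\frac{1}{4(k-3/2)}}$: the shift from the $k-2$ in Theorem \ref{thm3}(i) to $k-\tfrac32$ here reflects the fact that the odd-dimensional sphere's incidence bound saves one extra half-power of $q$ relative to the even case at the bottom of the recursion. For part (ii), the case $k=3$, I would not iterate but instead estimate $\nu_3(r)$ directly: expand, collapse two of the three sums into a single dot-product-counting quantity $\Pi_2$-style, and optimize; the extra structural information available for $k=3$ (a genuine three-variable incidence, handled via a Cauchy–Schwarz in the ``middle'' variable and the $S_j^{d-1}$ Fourier bound applied twice) yields the stronger exponent $\frac{d}{2}-\frac13$ in place of the $\frac{d}{2}-\frac14$ one would get from naively setting $k=3$ above.

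The main obstacle I anticipate is controlling the degenerate contributions at the ``bad'' frequencies — those $m\neq 0$ for which $m$ is isotropic with respect to the quadratic form defining $S_j^{d-1}$, or for which $|m|$ falls in the wrong square class. These are precisely the terms where the Gauss-sum cancellation fails, and in odd dimensions their total mass is a borderline $q^{(d-1)/2}\cdot(\text{stuff})$ that must be absorbed into the main term; this is where the hypotheses $d=4l+1$ (or $d=4l-1$ with $q\equiv 1\bmod 4$) and $j$ primitive are essential, since they pin down the sign of the quadratic character $\eta(-1)$ and of $\eta(j)$ and thereby show the bad frequencies contribute with a favorable sign or with a smaller count. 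A secondary technical point is bookkeeping the diagonal terms in the recursion (tuples where some $x^i$ coincide), but these are lower-order and routine. Once the bad-frequency mass is shown to be $o(|A|^{2k}/q)$ under the stated size hypothesis on $A$, a final Cauchy–Schwarz converting the $\ell^2$ bound on $\nu_k(r)-|A|^k/q$ into the lower bound $|\Delta_k(A)|\gg q$ completes the proof.
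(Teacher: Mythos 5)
Your plan is essentially the original Covert--Koh--Pi second-moment argument: expand the counting function $\nu_k(r)$ in characters, use the Fourier decay $|\widehat{\mathbf 1_{S_j^{d-1}}}(m)|\ll q^{-d/2}$, and iterate. That machinery is exactly what yields the threshold $q^{\frac{d-1}{2}+\frac{1}{k-1}}$ of Theorem \ref{thmcovert}; it cannot by itself produce the improved exponent $\frac{1}{4(k-3/2)}$, and your proposal never supplies the ingredient that does. The paper's proof is structured differently: it writes $\Delta_k(A)=\Delta_2(A,A_{k-1})$ for the sumset $A_{k-1}$, lower-bounds $|A_{k-1}|\ge |A|^{2k-2}/E_{k-1}(A)$ by Cauchy--Schwarz (Lemma \ref{cardak}), controls $E_{k-1}(A)$ by iterating $E_l(A)\ll q^{d-1}E_{l-1}(A)+|A|^{2l-1}/q$ down to the base case $E_2(A)\ll |A|^3/q+q^{\frac{d-2}{2}}|A|^2$, and concludes with the two-set distance theorem of Iosevich--Koh--Lee--Pham--Shen (Theorem \ref{doodAB}), which for $j$ a non-square requires only $|A|\,|A_{k-1}|\ge 4q^d$. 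The decisive input is that base-case additive energy bound (Lemma \ref{energydood}, from the extension/restriction estimates of Koh--Pham--Vinh); it is \emph{not} a consequence of the pointwise Fourier decay of the sphere, and it is precisely where the hypotheses $d=4l+1$ (or $d=4l-1$ with $q\equiv 1 \bmod 4$) and $j$ primitive enter. Your proposed one-step inequality with factor $q^{1/2}|A|$ is neither derived nor shown to unwind to the stated exponent, so the central quantitative claim of your write-up is unsupported.

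Two smaller points. First, your heuristic for why the odd case gives $k-\tfrac32$ in place of $k-2$ (``the incidence bound saves an extra half-power at the bottom of the recursion'') is not what happens: the energy estimates used are identical in the even and odd cases; the gain comes from the fact that in odd dimensions the two-set distance theorem needs only the product condition $|A|\,|B|\ge 4q^d$, whereas in even dimensions one must additionally force $|B|\ge 2q^{\frac{d+1}{2}}$, and it is this extra constraint that binds and costs the difference. Second, for $k=3$ the exponent $\frac d2-\frac13$ falls out of exactly the same scheme applied to $|A|\,|A_2|\ge |A|^5/E_2(A)\gg |A|^3q^{-\frac{d-2}{2}}\ge 4q^d$; your ``Cauchy--Schwarz in the middle variable plus Fourier bound applied twice'' is too vague to certify that this is what you would obtain.
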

The rest of this paper is organized as follows: in Section 2, we will introduce some preliminary definitions and results which are needed for our proofs. Next,  Section 3 contains proofs of Theorems \ref{thm1} and \ref{thm2}. Theorems \ref{thm3} and \ref{thm4} are proved in Section 4.

\section{Additive energies and key lemmas}
The main purpose of this section is to introduce definitions of energies of sets and list related estimates which will be important in proofs of our main theorems.

Let us first recall the notion of the additive energy of sets in $\mathbb{F}_q^d$.
\begin{definition}
Let $A, B\subset \mathbb{F}_q^d$, the additive energy of $A$ and $B$, denoted $E(A,B)$, is defined by \begin{align*}
    E(A,B)=| \{(a,a',b,b')\in A^2\times B^2: a+b=a'+b'\}|.
\end{align*}
If $A=B$, we denote  the additive energy of $A$ by $E(A)$.
\end{definition}
\begin{definition}
Let $A\subset \mathbb{F}_q^d$ and  $k\ge 1$ be an integer. The $k-$energy of $A$, denoted $E_k(A)$, is defined by 
\begin{align*}
    E_k(A)=| \{ (x^1,\dots,x^{2k})\in A^{2k} : x^1+\dots+x^k=x^{k+1}+\dots+x^{2k}\}|.
\end{align*}
\end{definition}
Note that $E_1(A)=| A|$ and $E_2(A)=E(A)$.

In order to obtain our results, we would need to have good estimates for the energies for which we will use the following results. 
\begin{lemma}[Lemma 3.1, \cite{IKLPS}]\label{energydeven}
Suppose $d\ge 4$ is even, and $j$ is a nonzero element in $\mathbb{F}_q$. Let $A$ be a subset of $S_j^{d-1}$, then we have
\begin{align*}
    E(A)\ll \frac{| A|^3}{q}+q^{\frac{d-2}{2}}| A|^2.
\end{align*}
\end{lemma}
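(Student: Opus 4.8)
The plan is to estimate the additive energy $E(A)$ via Fourier analysis on $\mathbb{F}_q^d$, exploiting the regularity of the sphere $S_j^{d-1}$ (which, being a nondegenerate quadric with $j\neq 0$, has the Fourier decay $|\widehat{\mathbf 1}_{S_j^{d-1}}(m)|\ll q^{-\frac{d+1}{2}}$ for $m\neq \mathbf 0$, together with $|S_j^{d-1}|\sim q^{d-1}$). First I would rewrite the additive energy as a sum over the convolution $\mathbf 1_A*\mathbf 1_A$: since $E(A)=\sum_{z}(\mathbf 1_A*\mathbf 1_A)(z)^2$, Plancherel gives
\begin{align*}
E(A)=q^{2d}\sum_{m\in \mathbb{F}_q^d}\big|\widehat{\mathbf 1_A}(m)\big|^4.
\end{align*}
The term $m=\mathbf 0$ contributes $q^{2d}\cdot\big(|A|q^{-d}\big)^4=|A|^4q^{-2d}$; this is the ``expected'' main term but it is too large, so the key point is that $A$ lives on the sphere, which forces cancellation.

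Next I would bring in the sphere. Writing $\mathbf 1_A=\mathbf 1_A\cdot\mathbf 1_{S_j^{d-1}}$ and using $\widehat{fg}=\widehat f*\widehat g$ (suitably normalized), one expresses $\widehat{\mathbf 1_A}(m)$ in terms of $\widehat{\mathbf 1_{S}}$; alternatively, and more cleanly, I would count $E(A)$ directly as
\begin{align*}
E(A)=\sum_{a,a',b,b'\in A}\mathbf 1[a+b=a'+b']\le \sum_{a,b,a',b'\in S_j^{d-1}}\mathbf 1_A(a)\mathbf 1_A(b)\,\mathbf 1[a+b=a'+b'],
\end{align*}
and open the constraint $a+b=a'+b'$ with additive characters, $\mathbf 1[v=\mathbf 0]=q^{-d}\sum_{m}\chi(m\cdot v)$. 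This yields
\begin{align*}
E(A)=q^{-d}\sum_{m\in\mathbb{F}_q^d}\Big(\sum_{a\in A}\chi(m\cdot a)\Big)\Big(\sum_{b\in A}\chi(m\cdot b)\Big)\Big(\sum_{a'\in S}\chi(-m\cdot a')\Big)\Big(\sum_{b'\in S}\chi(-m\cdot b')\Big),
\end{align*}
so that, after separating $m=\mathbf 0$ (contributing $q^{-d}|A|^2|S|^2\sim q^{-d}|A|^2q^{2(d-1)}=q^{d-2}|A|^2$), the remaining terms are controlled by the sphere's Fourier decay: each factor $\sum_{a'\in S}\chi(-m\cdot a')=q^d\widehat{\mathbf 1_S}(m)$ is $O(q^{\frac{d-1}{2}})$ for $m\neq\mathbf 0$. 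Then I would apply Cauchy–Schwarz to the two $A$-sums and use $\sum_m\big|\sum_{a\in A}\chi(m\cdot a)\big|^2=q^d|A|$ (Parseval) to pull out a clean bound.

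The heart of the argument is the off-origin estimate: bounding $\sum_{m\neq\mathbf 0}\big|\sum_{a\in A}\chi(m\cdot a)\big|^2 \cdot q^{d-1}$ by $q^{d-1}\cdot q^d|A| = q^{2d-1}|A|$, which after the $q^{-d}$ prefactor gives a contribution $\ll q^{d-1}|A|$. Combining, $E(A)\ll q^{d-2}|A|^2 + q^{d-1}|A|$; since $E(A)\ge |A|^2$ always, the second term is only relevant when $|A|\ll q^{d-1}$, but for the stated bound one wants $\frac{|A|^3}{q}$ in place of the trivial lower-order pieces. The main obstacle — and the step requiring care — is that the naive split above produces $q^{d-1}|A|$ rather than $\frac{|A|^3}{q}$; to get the sharper form one must not apply Cauchy–Schwarz symmetrically but instead keep one $A$-sum and one $S$-sum paired, bound the $S$-sum by its decay only on the non-principal frequencies, and estimate $\sum_{m}\big|\sum_{a\in A}\chi(m\cdot a)\big|^2\big|\sum_{b\in A}\chi(m\cdot b)\big|^2$ — i.e. the full energy again — by isolating the principal term and using the trivial bound $|\widehat{\mathbf 1_S}(m)|\le |S|q^{-d}$ on a carefully chosen range. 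Concretely, I expect the clean route is: $E(A)\le q^{-d}|A|^2|S|^2 + q^{-d}\sum_{m\neq 0}|\widehat{\mathbf 1_A}(m)\,q^d|^2\,(q^{\frac{d-1}{2}})^2 \cdot(\text{something})$, then reinvest $\sum_{m\neq0}|q^d\widehat{\mathbf 1_A}(m)|^2 \le q^d|A|$ and recognize $q^{d-2}|A|^2 \sim \frac{|A|^3}{q}$ precisely in the regime $|A|\sim q^{d-1}$, while for smaller $A$ a direct incidence-type count on the sphere (each pair $(a,b)\in A^2$ determines $a+b$, and the number of representations of a fixed value as $a'+b'$ with $a',b'\in S$ is $\ll q^{d-2}$ by the Fourier decay) gives $E(A)\ll |A|^2 q^{d-2}$ unconditionally, which is absorbed into $\frac{|A|^3}{q}+q^{\frac{d-2}{2}}|A|^2$ after balancing. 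I would present the Fourier computation as the main line and remark that the $q^{\frac{d-2}{2}}|A|^2$ term is exactly the contribution one gets from the decay exponent $\frac{d+1}{2}$ fed through Parseval.
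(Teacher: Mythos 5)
This lemma is not proved in the paper; it is quoted verbatim from \cite{IKLPS} (Lemma 3.1 there), where it is essentially equivalent to a sharp $L^2\to L^4$ extension estimate for spheres in even dimensions. Your proposal does not reach that statement, and the gap is quantitative, not cosmetic.

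The decisive loss occurs the moment you relax the constraint $a',b'\in A$ to $a',b'\in S_j^{d-1}$. After opening the delta with characters, the $m=\mathbf 0$ term of your majorant is $q^{-d}|A|^2|S|^2\sim q^{d-2}|A|^2$, whereas the lemma asserts $q^{\frac{d-2}{2}}|A|^2$: you are off by a factor of $q^{\frac{d-2}{2}}$ in the main term, and this cannot be recovered afterwards. Your closing claim that the unconditional bound $E(A)\ll q^{d-2}|A|^2$ ``is absorbed into $\frac{|A|^3}{q}+q^{\frac{d-2}{2}}|A|^2$ after balancing'' is false: for $d\ge 4$ one has $q^{\frac{d-2}{2}}<q^{d-2}$, so absorption would require $q^{d-2}\le |A|/q$, i.e.\ $|A|\gtrsim q^{d-1}$ --- exactly the regime where $A$ is essentially the whole sphere and the lemma is trivial. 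For, say, $|A|\sim q^{\frac{d-1}{2}}$ (the relevant range in this paper), your bound exceeds the claimed one by a large power of $q$. The alternative route you sketch, keeping all four points in $A$, fixes the $m=\mathbf 0$ term (it becomes $q^{-d}|A|^4\le |A|^3/q$) but then the off-diagonal sum is $q^{-d}\sum_{m\ne \mathbf 0}|\sum_{a\in A}\chi(m\cdot a)|^4$, and there is no useful pointwise bound on $\sum_{a\in A}\chi(m\cdot a)$ for an arbitrary subset $A$ of the sphere; Parseval alone cannot produce the exponent $\frac{d-2}{2}$.

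The missing idea is precisely the content of \cite{IKLPS}: a refined analysis of the representation function $z\mapsto|\{(a,b)\in A^2: a+b=z\}|$ using the explicit Fourier transform of the sphere (Gauss and Kloosterman sums), with a separate treatment of the degenerate frequencies (e.g.\ $\lVert z\rVert$ in exceptional positions), which in even dimensions yields the $L^4$ extension inequality equivalent to $E(A)\ll |A|^3/q+q^{\frac{d-2}{2}}|A|^2$. Mere regularity of the sphere (decay $q^{-\frac{d+1}{2}}$ plus $|S|\sim q^{d-1}$) is not enough; indeed the lemma is specific to spheres and to even $d$ (and to primitive radii in the odd-dimensional companion, Lemma \ref{energydood}), which already signals that a soft argument valid for every regular variety cannot prove it.
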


\begin{lemma}[Theorem 2.5, \cite{KPV}]\label{energydood}
Suppose either $d=4l-1$ and $q=1\mod 4$ or $d=4l+1, l\in \N$. Assume that $j$ is a primitive element in $\mathbb{F}_q^*$. Let $A$ be a subset of $S_j^{d-1}$, then we have
\begin{align*}
    E(A)\ll  \frac{| A|^3}{q}+q^{\frac{d-2}{2}}| A|^2.
\end{align*}
\end{lemma}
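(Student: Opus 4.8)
The plan is to follow the Fourier-analytic template that also underlies Lemma \ref{energydeven}, but feed it the sharper spherical incidence/restriction input that is available under the arithmetic hypotheses on $d$ and $q$. First I would expand the additive energy $E(A)$ in terms of the Fourier transform. Writing $f = \mathbf{1}_A$ and using orthogonality, one has
\begin{align*}
    E(A) = q^{2d}\sum_{m\in \mathbb{F}_q^d}\big|\widehat{f}(m)\big|^2\,\big|\widehat{f}(-m)\big|^2 \cdot (\text{normalization}),
\end{align*}
so that, after isolating the $m=\mathbf{0}$ term which contributes $|A|^4/q^d$, the task reduces to bounding $\sum_{m\neq \mathbf 0}|\widehat{f}(m)|^4$ weighted against the indicator of the sphere. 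The standard move is to dominate $\widehat{f}$ on the sphere by the Fourier transform of the sphere itself via Cauchy--Schwarz, reducing everything to the spherical extension/restriction constant; heuristically $\sum_{m\neq\mathbf 0}|\widehat f(m)|^4$ is controlled by $|A|^2$ times the $L^4$-bound for the extension operator attached to $S_j^{d-1}$.

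The key point, and the reason the hypotheses are exactly $d=4l+1$, or $d=4l-1$ with $q\equiv 1\bmod 4$, and $j$ primitive, is that under precisely these conditions the relevant Gauss-sum and Kloosterman-sum evaluations for the sphere collapse so that the extension operator for $S_j^{d-1}$ obeys the same $L^2\to L^4$ bound that the paraboloid enjoys in even dimension; this is the content of the cited Theorem~2.5 of \cite{KPV}. Concretely, the count of solutions to $x^1+x^2=x^3+x^4$ with $x^i\in S_j^{d-1}$, weighted appropriately, matches the even-dimensional sphere behaviour, which is what produces the two terms $|A|^3/q$ and $q^{(d-2)/2}|A|^2$: the first is the ``main'' or random term, the second the ``arithmetic'' or trivial-incidence term coming from pairs $(x^1,x^3)$ that force a large affine subspace of solutions. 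I would assemble these contributions and conclude.

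The main obstacle is the sphere Fourier computation itself — establishing that $\widehat{\mathbf{1}_{S_j^{d-1}}}$ is small enough, with the constants under control, after the Gauss sum $\sum_{t} \chi(t\cdot(\text{quadratic form}))$ is evaluated and one checks that the extra factors $\eta(j)$, $\eta(-1)$, and the dimension parity do not conspire to create a secondary large peak. Since that is exactly what \cite{KPV} proves, the honest plan is to invoke it as a black box; absent that, one would have to redo the stationary-phase/Gauss-sum analysis for the sphere in each residue class of $d \bmod 4$, which is the genuinely delicate part. The rest — expanding $E(A)$, applying Cauchy--Schwarz, and splitting off $m=\mathbf 0$ — is routine.
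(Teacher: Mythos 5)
The paper gives no proof of this lemma: it is quoted verbatim as Theorem~2.5 of \cite{KPV}, so the only ``proof'' on offer is the citation itself. Your proposal correctly identifies that the substantive content (the $L^2\to L^4$ spherical extension/restriction estimate for $S_j^{d-1}$ under the stated conditions on $d \bmod 4$, $q \bmod 4$, and $j$ being a non-square) lives entirely in that reference and is to be invoked as a black box, which is exactly what the paper does; the surrounding Fourier expansion you sketch is the standard route and is consistent with how \cite{KPV} derives the energy bound.
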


\begin{lemma}[Lemma 4.1, \cite{Co}]\label{energykinduction}
Let $A$ be a subset of $S_j^{d-1}$ and $k\ge 2$ be an integer, then we have
    \begin{align*}
        E_{k}(A)\ll q^{d-1}E_{k-1}(A) +\frac{| A|^{2k-1}}{q}.
    \end{align*}
\end{lemma}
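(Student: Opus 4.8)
The plan is to peel off one summand at a time, using the defining equation $|x|=j$ of the sphere twice so as to trade a quadratic constraint for a linear one. Writing $r_\ell(t)=\#\{(z^1,\dots,z^\ell)\in A^\ell: z^1+\dots+z^\ell=t\}$, one has
\[
E_k(A)=\sum_t r_k(t)^2=\sum_{a\in A}\sum_t r_k(t)\,r_{k-1}(t-a)=\sum_{a\in A}F(a),
\qquad F(a):=\#\{(\vec x,\vec y)\in A^k\times A^{k-1}:\ \textstyle\sum x^i-\sum y^j=a\}\ge 0 .
\]
Since $A\subseteq S_j^{d-1}$, enlarging the range of $a$ to all of $S_j^{d-1}$ gives
\[
E_k(A)\ \le\ \#\Big\{(\vec x,\vec y)\in A^k\times A^{k-1}:\ \big|\,x^1+\dots+x^k-y^1-\dots-y^{k-1}\,\big|=j\Big\}=:G ,
\]
so it suffices to show $G\ll q^{d-1}E_{k-1}(A)+|A|^{2k-1}/q$.

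Next I would detect the quadratic equation by additive characters, $G=\frac1q\sum_{\vec x,\vec y}\sum_{s\in\mathbb{F}_q}\chi\big(s(|x^1+\dots+x^k-y^1-\dots-y^{k-1}|-j)\big)$. The term $s=0$ contributes exactly $|A|^{2k-1}/q$, the second term of the claimed bound. For $s\ne0$ I isolate one further summand $x^k\in A$ and use $|x^k|=j$: with $w:=x^1+\dots+x^{k-1}-y^1-\dots-y^{k-1}$ one has $|w+x^k|=|w|+2\,w\cdot x^k+j$, so $\chi\big(s(|w+x^k|-j)\big)=\chi(s|w|)\,\chi(2s\,w\cdot x^k)$ factors and the $x^k$–sum collapses to the one–variable exponential sum $\sum_{x^k\in A}\chi(2s\,w\cdot x^k)$. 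Split according to whether $w=0$: the tuples $(x^1,\dots,x^{k-1},y^1,\dots,y^{k-1})\in A^{2k-2}$ with $w=0$ are precisely the $E_{k-1}(A)$ solutions of $x^1+\dots+x^{k-1}=y^1+\dots+y^{k-1}$, each of which contributes $|A|$ (the inner sum is $|A|$ since $|x^k|-j=0$); because $|A|\le|S_j^{d-1}|\ll q^{d-1}$ and the sum over $s$ costs only a factor $\le q$, this part is $\ll q^{d-1}E_{k-1}(A)$.

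Everything is thus reduced to the contribution of the pairs with $w\ne0$,
\[
\frac1q\sum_{s\ne0}\ \sum_{w\ne0}N(w)\,\chi(s|w|)\sum_{x\in A}\chi(2s\,w\cdot x),
\qquad N(w):=\#\{(\vec x',\vec y)\in A^{k-1}\times A^{k-1}:\ \textstyle\sum x'-\sum y=w\},
\]
with $\sum_w N(w)=|A|^{2k-2}$ and $N(0)=E_{k-1}(A)$. Here the sphere must genuinely be exploited. The geometric input is that for $h\ne0$ the level set $\{(x,x')\in A^2:\ x-x'=h\}$ has size $\ll q^{d-2}$: indeed $x\in S_j^{d-1}$ and $|x-h|=|x|=j$ force $x$ onto the affine hyperplane $\{2x\cdot h=-|h|\}$, whose intersection with $S_j^{d-1}$ is a quadric in $d-1$ variables. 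I would combine this incidence bound with Parseval for $A$ (in the form $\sum_m|\widehat{\textbf{1}_A}(m)|^{2(k-1)}=q^{-(2k-3)d}E_{k-1}(A)$): the relevant second moment $\sum_w N(w)\big|\sum_{x\in A}\chi(2s\,w\cdot x)\big|^2$ unfolds to $q^{2(k-1)d}\sum_{x,x'\in A}|\widehat{\textbf{1}_A}(2s(x-x'))|^{2(k-1)}$, whose diagonal is $|A|^{2k-1}$ and whose off–diagonal is controlled (uniformly in $s\ne0$) by the incidence bound times Parseval; feeding this back into the $w$– and $s$–summations is what should absorb the $w\ne0$ contribution into $q^{d-1}E_{k-1}(A)+|A|^{2k-1}/q$.

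The step I expect to be the main obstacle is exactly this last one. A crude pointwise estimate on the sphere–hyperplane sections, $\big|A\cap\{2x\cdot w=-|w|\}\big|\le\big|S_j^{d-1}\cap(\text{hyperplane})\big|\ll q^{d-2}$, only yields the weaker bound $q^{d-2}|A|^{2k-2}$ (which for $|A|\le|S_j^{d-1}|$ is genuinely larger than the asserted $|A|^{2k-1}/q$); replacing it by $q^{d-1}E_{k-1}(A)+|A|^{2k-1}/q$ uses in an essential way the square–root cancellation carried by the sphere — equivalently, the fact that $A\subseteq S_j^{d-1}$ equidistributes among affine hyperplanes on average, encoded through the sphere's Fourier decay / Gauss–sum bounds — and one must organise the Cauchy–Schwarz over $w$ and the summation over $s$ carefully so that neither loses a power of $q$.
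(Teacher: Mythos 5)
This lemma is only quoted in the paper from \cite{Co}, so the comparison is with the argument there. Your opening moves are correct and coincide with the standard ones: bounding $E_k(A)$ by the number of tuples in $A^{2k-1}$ whose signed sum lies on $S_j^{d-1}$, extracting $|A|^{2k-1}/q$ from the trivial frequency $s=0$, and obtaining $|A|\,E_{k-1}(A)\ll q^{d-1}E_{k-1}(A)$ from the diagonal $w=0$. But the entire content of the lemma sits in the step you explicitly defer (``is what should absorb\dots''), and the route you sketch for it provably does not close. Applying Cauchy--Schwarz over $w$ with weight $N(w)$ pairs your second-moment bound against the total mass $\sum_w N(w)=|A|^{2k-2}$, giving
\[
\Big|\sum_{w\ne 0}N(w)\chi(s|w|)\sum_{x\in A}\chi(2sw\cdot x)\Big|\ \le\ |A|^{k-1}\Big(|A|^{2k-1}+q^{2d-2}E_{k-1}(A)\Big)^{1/2},
\]
so after the $s$-summation you are left with $|A|^{2k-\frac32}+|A|^{k-1}q^{d-1}E_{k-1}(A)^{1/2}$. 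The first term is $\le |A|^{2k-1}/q$ only when $|A|\ge q^{2}$, and the second is $\le q^{d-1}E_{k-1}(A)$ only when $E_{k-1}(A)\ge |A|^{2k-2}$, which essentially never holds. So no amount of care in organising this particular Cauchy--Schwarz will recover the stated bound; the loss is structural, not technical.

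The missing idea is to let $E_k(A)$ itself appear on the right-hand side and absorb it. Expand the sphere in $\mathbb{F}_q^d$-Fourier rather than over the $s$-variable: with $S_A(m)=\sum_{x\in A}\chi(m\cdot x)$ and $\textbf{1}_{S_j^{d-1}}(z)=\sum_m \widehat{\textbf{1}_{S_j^{d-1}}}(m)\chi(m\cdot z)$, the count of tuples with $\sum x^i-\sum y^j\in S_j^{d-1}$ equals $\sum_m \widehat{\textbf{1}_{S_j^{d-1}}}(m)\,S_A(m)^k S_A(-m)^{k-1}$. The $m=0$ term is $q^{-d}|S_j^{d-1}|\,|A|^{2k-1}\ll |A|^{2k-1}/q$, while for $m\ne0$ one uses the Gauss-sum decay $|\widehat{\textbf{1}_{S_j^{d-1}}}(m)|\ll q^{-(d+1)/2}$ (this is where $j\ne0$ is needed, a hypothesis the quoted statement suppresses) together with Cauchy--Schwarz and the identities $\sum_m|S_A(m)|^{2l}=q^dE_l(A)$:
\[
\sum_{m\ne0}|S_A(m)|^{2k-1}\ \le\ \Big(\sum_m|S_A(m)|^{2k-2}\Big)^{1/2}\Big(\sum_m|S_A(m)|^{2k}\Big)^{1/2}\ =\ q^{d}\,E_{k-1}(A)^{1/2}E_k(A)^{1/2}.
\]
This yields $E_k(A)\ll |A|^{2k-1}/q+q^{(d-1)/2}E_{k-1}(A)^{1/2}E_k(A)^{1/2}$, and considering which term dominates gives either $E_k(A)\ll |A|^{2k-1}/q$ or $E_k(A)\ll q^{d-1}E_{k-1}(A)$, i.e.\ the lemma. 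Note that this absorption step is the square-root cancellation you were looking for, and it requires no hyperplane--sphere incidence input at all; your $w\ne0$ decomposition and the bound $|A\cap H|\ll q^{d-2}$ are red herrings here.
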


Let $A, B\subset \mathbb{F}_q^d$, the distinct distances set between $A$ and $B$ is defined by
\begin{align*}
    \Delta_2(A,B):=\{ | x+y|: x\in A,y\in B\}.
\end{align*}
As we shall see in the proofs of our results, given $A\subset \mathbb{F}_q^d$, we shall relate $| \Delta_k(A)|$ to energies $E_l(A)$ and $| \Delta_2(A,B)|$, where $B$ is an arbitrary set in $\mathbb{F}_q^d$. Therefore, we also recall some known results regarding distances of two sets in $\mathbb{F}_q^d$.

\begin{lemma}[Theorem 3.5 (1), \cite{KSu}]\label{devenAB2}
Let $d\ge 2$ is even. Let $A,B\subset \mathbb{F}_q^d$ such that $| A| | B| \ge 16q^d$ and $q^{\frac{d-1}{2}}\le | A| < q^{\frac{d+1}{2}}$. 
If $| B| \ge 2q^{\frac{d+1}{2}}$ then we have
\begin{align*}
    | \Delta_2(A,B)|\gg q.
\end{align*}
\end{lemma}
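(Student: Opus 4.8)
The plan is to run a Fourier-analytic second-moment argument. Write $Q(x)=|x|=x_1^2+\dots+x_d^2$ and fix a nontrivial additive character $\chi$ of $\F_q$; the structural input is that for even $d$ and $t\neq 0$ the sphere $S_t=\{x\in\F_q^d:Q(x)=t\}$ is a \emph{regular} variety. Put $\lambda=\textbf{1}_A*\textbf{1}_B$, so $\lambda(z)=|\{(a,b)\in A\times B:a+b=z\}|\le\min(|A|,|B|)$. Since $|A|<q^{(d+1)/2}\le 2q^{(d+1)/2}\le|B|$ we have $|A|\le|B|$, hence the additive energy obeys $E(A,B)=\sum_z\lambda(z)^2\le|A|\sum_z\lambda(z)=|A|^2|B|$. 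Let $\nu(t)=|\{(x,y)\in A\times B:Q(x+y)=t\}|$, so that $\sum_t\nu(t)=|A||B|$ and $\Delta_2(A,B)=\operatorname{supp}\nu$. Discarding $t=0$ and applying Cauchy--Schwarz on the support,
\[
|\Delta_2(A,B)|\ \ge\ \frac{\big(|A||B|-\nu(0)\big)^2}{\sum_{t\neq 0}\nu(t)^2}.
\]
Completing the square coordinate-wise turns the exponential sum defining $\widehat{\textbf{1}_{S_t}}(m)$ into a Gauss sum times a Kloosterman sum, and Weil's bound gives $|\widehat{\textbf{1}_{S_t}}(m)|\ll q^{-(d+1)/2}$ for all $m\neq\mathbf 0$; feeding this into a short Fourier estimate for $\nu(0)$ yields $\nu(0)\le\tfrac12|A||B|$ once $|A||B|\ge 16q^d$ (and $q$ is large, which may be assumed, small $q$ being absorbed into the implied constant). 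Thus it suffices to prove $\sum_{t\neq 0}\nu(t)^2\ll(|A||B|)^2/q$, and then $|\Delta_2(A,B)|\gg q$.

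Next I would use the exact identity obtained from $\sum_t\nu(t)^2=\sum_{Q(z)=Q(z')}\lambda(z)\lambda(z')$ by the substitution $u=z-z'$, $v=z+z'$ (so $Q(z)-Q(z')=u\cdot v$) followed by expansion of the condition $u\cdot v=0$ in additive characters:
\begin{align*}
\sum_t\nu(t)^2\ &=\ \frac{(|A||B|)^2}{q}\ +\ \frac{q-1}{q}\,E(A,B)\ +\ \mathrm{Err},\\
\mathrm{Err}\ &=\ \frac1q\sum_{\substack{u\neq\mathbf 0\\ \tau\neq 0}}\ \sum_v\lambda\!\Big(\tfrac{u+v}{2}\Big)\lambda\!\Big(\tfrac{v-u}{2}\Big)\,\chi(\tau\,u\!\cdot\! v).
\end{align*}
Since $E(A,B)\le|A|^2|B|\ll(|A||B|)^2/q$ — the last inequality because $|B|\ge 2q^{(d+1)/2}>q$ — the whole matter reduces to the single bound $\mathrm{Err}\ll(|A||B|)^2/q$.

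The hard part is exactly this: controlling $\mathrm{Err}$. After the substitution $w=(u+v)/2$ each inner sum becomes $\chi(-\tau Q(u))\sum_w\lambda(w)\lambda(w-u)\,\chi(2\tau\,u\!\cdot\!w)$, a Fourier coefficient of $w\mapsto\lambda(w)\lambda(w-u)$; regrouping on the spectral side shows $\mathrm{Err}$ is, up to explicit powers of $q$, a combination of $\sum_{j\neq 0}\big|\sum_{Q(m)=j}\widehat{\textbf{1}_A}(m)\widehat{\textbf{1}_B}(m)\big|^2$ and pieces localized on the light cone $\{Q(m)=0\}$. The crude step here — Cauchy--Schwarz over the quadric $\{Q(m)=j\}$ — loses a factor of size $\sim q^{d-1}$ and is wasteful by a full power of $q$, so one must keep the cancellation by averaging in $j$: with the decay $|\widehat{\textbf{1}_{S_j}}(z)|\ll q^{-(d+1)/2}$ ($j\neq 0$, $z\neq\mathbf 0$) and orthogonality in $j$, the principal part is re-expressed through $\sum_{j\neq 0}\nu(j)^2$ itself (plus terms governed by $E(A,B)$ and by $\lambda(0)$), whereupon one \emph{closes the loop} — as in the Iosevich--Rudnev-type argument for the single-set problem — absorbing the reappearance of $\sum_{j\neq 0}\nu(j)^2$ into the left side of the identity. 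What survives is dominated by $q^{d-1}\lambda(0)^2$ (harmless, since $\lambda(0)\le|A|$ and $|B|\gg q^{d/2}$), by $E(A,B)\le|A|^2|B|$, and by light-cone contributions estimated via Plancherel; this is exactly where the hypotheses $|A|<q^{(d+1)/2}$, $|B|\ge 2q^{(d+1)/2}$ and $|A||B|\ge 16q^d$ are all used, and where the constants $16$ and $2$ get fixed. The main obstacle I foresee is precisely this delicate bookkeeping of the subleading terms that survive the cancellation; the full computation is carried out in \cite{KSu}.
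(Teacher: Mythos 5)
This lemma is not proved in the paper at all: it is imported verbatim as Theorem 3.5(1) of Koh--Sun \cite{KSu} and used as a black box, so there is no internal proof to compare your attempt against. Judged on its own terms, your outline correctly sets up the standard second-moment framework (the counting function $\nu$, Cauchy--Schwarz on its support, the identity $\sum_t\nu(t)^2=\sum_{Q(z)=Q(z')}\lambda(z)\lambda(z')$, the substitution $u=z-z'$, $v=z+z'$, and the extraction of the main term $(|A||B|)^2/q$ plus $\frac{q-1}{q}E(A,B)$), and the reductions you perform there are all sound.

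However, as a proof it has a genuine gap: the inequality $\mathrm{Err}\ll(|A||B|)^2/q$ \emph{is} the theorem, and your treatment of it is a description of intent (``close the loop,'' ``delicate bookkeeping of the subleading terms'') rather than an argument, ending with an explicit deferral to \cite{KSu} --- which makes the proposal circular as a proof of the cited result. Concretely, two things are never actually established. First, the step $\nu(0)\le\frac12|A||B|$ is not a ``short Fourier estimate'': the zero sphere $S_0$ is not a regular variety for even $d$ (its Fourier transform is of size $q^{-d/2}$ on the light cone $\{Q(m)=0\}$, not $q^{-(d+1)/2}$), so the condition $|A||B|\ge 16q^d$ alone does not close this without a separate light-cone analysis. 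Second, the asymmetric hypotheses $q^{\frac{d-1}{2}}\le|A|<q^{\frac{d+1}{2}}$ and $|B|\ge 2q^{\frac{d+1}{2}}$ --- which are the whole point of stating the lemma in this two-set form rather than with the symmetric threshold $|A||B|\gg q^{d+1}$ of Lemma \ref{twosets} --- are never actually used in any displayed inequality; you only assert that they ``enter'' in the unperformed bookkeeping. Until the light-cone contribution and the self-improving absorption of $\sum_{j\ne0}\nu(j)^2$ are written out and shown to close under exactly these hypotheses, the proof is a plausible plan, not a proof.
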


\begin{lemma}[Theorem 1.19, \cite{IKLPS}]\label{doodAB}
Let $j$ be a non-square number of $\mathbb{F}_q^\ast$. Assume $d=4l+1, l\in \N$, or $d=4l-1$ and $q\equiv 1\mod 4$. Let $A\subset S_j^{d-1}$ and $B\subset \mathbb{F}_q^d$. If $| A| | B| \ge 4q^d$, then we have
\begin{align*}
    | \Delta_2(A,B)| \ge \frac{q}{4}.
\end{align*}
\end{lemma}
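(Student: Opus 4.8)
The plan is to prove the bound by a standard second-moment (Cauchy--Schwarz) argument: I reduce the lower bound on $|\Delta_2(A,B)|$ to an upper bound on a ``distance energy'', and then extract that energy bound from the spectral behaviour of the sphere $S_j^{d-1}$. For $t\in\F_q$ put $\nu(t)=|\{(x,y)\in A\times B:\ |x+y|=t\}|$, so that $\sum_t\nu(t)=|A||B|$. Cauchy--Schwarz gives
\[
(|A||B|)^2=\Bigl(\sum_{t\in\Delta_2(A,B)}\nu(t)\Bigr)^2\le |\Delta_2(A,B)|\cdot\sum_t\nu(t)^2 ,
\]
so it suffices to prove $\sum_t\nu(t)^2\le \tfrac1q|A|^2|B|^2+C_0\,|A||B|\,q^{d-1}$ for an absolute constant $C_0$. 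Indeed, when $|A||B|\ge 4q^d$ the second term is at most $\tfrac{C_0}{4}\cdot\tfrac1q|A|^2|B|^2$, whence $|\Delta_2(A,B)|\ge q/(1+C_0/4)\gg q$, and with the calibration in the statement this is $\ge q/4$.

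Next I Fourier-expand. Writing $\widehat A(m)=\sum_{x\in A}\chi(-m\cdot x)$ (and likewise $\widehat B$, $\widehat{S_t}$) for the unnormalised transform, and using $\nu(t)=\sum_{z:\,|z|=t}(\mathbf 1_A*\mathbf 1_B)(z)$, Plancherel yields $\nu(t)=q^{-d}\sum_m\widehat{S_t}(m)\,\overline{\widehat A(m)\widehat B(m)}$. Squaring and summing over $t$ forces the radius-averaged sphere correlation $\sum_t\widehat{S_t}(m)\overline{\widehat{S_t}(m')}$, which I will evaluate by completing the square (a Gauss-sum computation) as
\[
\sum_t\widehat{S_t}(m)\overline{\widehat{S_t}(m')}=q^{2d-1}\,\mathbf 1[m=m'=0]+q^{d}\,\mathbf 1[|m|=|m'|]-q^{d-1}.
\]
Substituting, the first bracket produces the main term $\tfrac1q|A|^2|B|^2$, the constant $-q^{d-1}$ produces a harmless negative term $-q^{d-1}|A\cap(-B)|^2$, and the middle bracket leaves the decisive quantity
\[
T:=q^{-d}\sum_{r\in\F_q}\Bigl|\sum_{m:\,|m|=r}\widehat A(m)\widehat B(m)\Bigr|^2 ,
\]
so that everything reduces to proving $T\le C_0\,|A||B|\,q^{d-1}$.

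To handle $T$, Cauchy--Schwarz in $m$ together with Parseval $\sum_m|\widehat B(m)|^2=q^d|B|$ gives
\[
T\le q^{-d}\Bigl(\max_r a_r\Bigr)\sum_r\sum_{|m|=r}|\widehat B(m)|^2=|B|\cdot\max_r a_r,\qquad a_r:=\sum_{m:\,|m|=r}|\widehat A(m)|^2 .
\]
Opening the square gives the geometric identity $a_r=\sum_{x,x'\in A}\widehat{S_r}(x-x')$. The diagonal $x=x'$ contributes $|A|\,|S_r|\ll |A|q^{d-1}$, which is exactly the target order; hence the entire matter is to show that the off-diagonal sum $\sum_{x\neq x'}\widehat{S_r}(x-x')$ is also $O(|A|q^{d-1})$ for every $r$. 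In other words, $A$ must obey the $L^2$ sphere-restriction bound $\max_r a_r\ll |A|q^{d-1}$, i.e. the $L^2$-mass of $\widehat A$ is spread evenly over the level sets $\{|m|=r\}$.

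The hard part is exactly this last estimate, and it is where the hypotheses on $j$, $d$ and $q$ are needed. The pointwise bound $|\widehat{S_r}(w)|\le 2q^{d/2}$ for $w\neq 0$ only gives $\sum_{x\neq x'}\widehat{S_r}(x-x')\ll |A|^2q^{d/2}$, which beats $|A|q^{d-1}$ only when $|A|\lesssim q^{d/2-1}$ and is useless for large $A$; genuine cancellation in the off-diagonal sum is required. The critical radius is the isotropic one $r=0$, where $\widehat{S_0}$ is largest and least oscillatory and the sign of the governing quadratic Gauss sum enters. The arithmetic conditions ($j$ a non-square, together with $d=4l+1$, or $d=4l-1$ with $q\equiv 1\bmod 4$) are chosen precisely so that this Gauss-sum sign is favourable and the cone-frequency contribution $a_0$ does not exceed its average $|A|q^{d-1}$; this is the content of the sphere estimates of \cite{IKLPS}, of the same flavour as the additive-energy bounds in Lemmas \ref{energydeven}--\ref{energydood}. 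Once $\max_r a_r\ll |A|q^{d-1}$ is secured, the three displayed steps close the argument.
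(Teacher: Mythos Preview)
The paper does not prove this lemma; it is quoted from \cite{IKLPS} as a black box. Your sketch is the standard second-moment/restriction argument by which the result is established there: reduce via Cauchy--Schwarz to the distance energy $\sum_t\nu(t)^2$, unfold with the (correct) identity for $\sum_t\widehat{S_t}(m)\overline{\widehat{S_t}(m')}$, and bound the remaining piece $T$ by the $L^2$ restriction quantity $\max_r a_r=\max_r\sum_{|m|=r}|\widehat A(m)|^2$. So you have in fact gone further than the present paper, which does not reproduce any of this.

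The only caveat is that the decisive step $\max_r a_r\ll |A|\,q^{d-1}$ --- the optimal $L^2\to L^2$ restriction estimate for $S_j^{d-1}$ under the stated hypotheses on $j,d,q$ --- is itself the substantive theorem of \cite{IKLPS}, and you (appropriately) defer it rather than prove it. Your pointer to Lemmas~\ref{energydeven}--\ref{energydood} is suggestive but not literal: those bound the fourth moment $E(A)=q^{-d}\sum_m|\widehat A(m)|^4$, not the uniform second moment over the level sets $\{|m|=r\}$ that you actually need here; the proofs are indeed of the same flavour, but the statements are not interchangeable. Granting the restriction input from \cite{IKLPS}, your chain of inequalities is sound and yields the claimed bound.
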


\begin{lemma}[Theorem 2.1, \cite{Shp}]\label{twosets}
Let $d\ge 1$, and let $A,B\subset \mathbb{F}_q^d$ such that $| A| | B| \gg q^{\frac{d+1}{2}}$. Then we have $| \Delta_2(A,B)| \gg q.$
\end{lemma}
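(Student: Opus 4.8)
The plan is to use the standard Fourier-analytic approach to distance problems over finite fields, reducing the lower bound for $|\Delta_2(A,B)|$ to an estimate on an exponential sum. Write $\Delta_2(A,B) = \{|x+y| : x \in A, y \in B\}$, where $|z| = z_1^2 + \cdots + z_d^2$. For each $t \in \mathbb{F}_q$ let $\nu(t)$ be the number of pairs $(x,y) \in A \times B$ with $|x+y| = t$; then $\sum_t \nu(t) = |A||B|$, and by Cauchy--Schwarz $|A|^2|B|^2 \le |\Delta_2(A,B)| \cdot \sum_t \nu(t)^2$. So it suffices to show $\sum_t \nu(t)^2 \ll |A|^2|B|^2/q$, which would give $|\Delta_2(A,B)| \gg q$.

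The first main step is to expand $\sum_t \nu(t)^2$ using the orthogonality of additive characters. One writes $\nu(t) = \sum_{x \in A, y \in B} \mathbf{1}[|x+y| = t]$ and detects the condition $|x+y| = t$ via $\frac{1}{q}\sum_{s \in \mathbb{F}_q} \chi(s(|x+y| - t))$. Squaring, summing over $t$, and carrying out the sum over $t$ forces the two character arguments to agree, leaving an expression of the shape
\[
\sum_t \nu(t)^2 = \frac{1}{q}\sum_{s \in \mathbb{F}_q}\sum_{x,x' \in A}\sum_{y,y' \in B} \chi\big(s(|x+y| - |x'+y'|)\big).
\]
The $s = 0$ term contributes exactly $|A|^2|B|^2/q$, which is the expected main term. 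It then remains to bound the contribution of $s \neq 0$. Here one completes the squares inside the quadratic form $|x+y| - |x'+y'|$ and applies the standard Gauss-sum evaluation for $\sum_{z} \chi(s|z|)$, which has modulus $q^{d/2}$; after changing variables one is left with a double sum over the Fourier transforms $\widehat{\mathbf{1}_A}$ and $\widehat{\mathbf{1}_B}$, and applying Plancherel to each of $A$ and $B$ bounds the off-diagonal contribution by roughly $q^{(d-1)/2}|A||B|$ (up to constants), using $\sum_m |\widehat{\mathbf{1}_A}(m)|^2 = |A|/q^d$ and the same for $B$.

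Putting the two pieces together, $\sum_t \nu(t)^2 \ll |A|^2|B|^2/q + q^{(d-1)/2}|A||B|$, and the hypothesis $|A||B| \gg q^{(d+1)/2}$ guarantees that the first term dominates, so $\sum_t \nu(t)^2 \ll |A|^2|B|^2/q$, and the Cauchy--Schwarz inequality above yields $|\Delta_2(A,B)| \gg q$. The step I expect to be most delicate is the bookkeeping in bounding the $s \neq 0$ sum: one must correctly handle the Gauss sum (whose sign/phase depends on $d$ and on whether $-1$ is a square, but whose modulus is always $q^{d/2}$, which is all that is needed), complete the square cleanly in the $x,x',y,y'$ variables, and verify that no additional degenerate terms arise. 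Since the excerpt cites this as Theorem 2.1 of \cite{Shp}, I would either reproduce this short Fourier computation or simply invoke the cited result; the computation itself is entirely standard and matches the method behind Lemmas \ref{devenAB2} and \ref{doodAB}.
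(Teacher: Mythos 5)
Your overall strategy (second moment of the counting function $\nu(t)$ plus Cauchy--Schwarz) is the standard route behind Shparlinski's theorem, but the key quantitative claim is wrong, and the step you yourself flagged as delicate is where it fails. From the identity $\sum_t\nu(t)^2=\frac1q\sum_{s\in\mathbb{F}_q}\big|\sum_{x\in A,\,y\in B}\chi(s|x+y|)\big|^2$, the $s=0$ term indeed gives $|A|^2|B|^2/q$, but for $s\neq 0$ the best general bound is $\big|\sum_{x,y}\chi(s|x+y|)\big|=\big|\sum_{x,y}g(x)h(y)\chi(2s\,x\cdot y)\big|\le (q^d|A||B|)^{1/2}$, obtained by writing $\chi(s|x+y|)=\chi(s|x|)\chi(s|y|)\chi(2s\,x\cdot y)$ with $|g|=\mathbf{1}_A$, $|h|=\mathbf{1}_B$, then applying Cauchy--Schwarz in $x$, extending the $x$-sum to all of $\mathbb{F}_q^d$, and using orthogonality. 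Summing over the $q-1$ nonzero values of $s$ gives $\sum_t\nu(t)^2\ll |A|^2|B|^2/q+q^{d}|A||B|$, not $|A|^2|B|^2/q+q^{(d-1)/2}|A||B|$. Your Plancherel step conflates the \emph{pointwise} error in $\nu(t)$ (which is of size $q^{(d-1)/2}(|A||B|)^{1/2}$ and comes from the sphere decay $|\widehat{\mathbf{1}_{S_t}}(m)|\ll q^{-(d+1)/2}$) with the $\ell^2$-error of the whole family $\{\nu(t)\}_t$. With the correct error term the method requires $|A||B|\gg q^{d+1}$, which is precisely the hypothesis of Shparlinski's Theorem 2.1.

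This is not merely a lossy estimate: the statement as literally printed is false, so no proof of it can be repaired. Take $q=p^2$, $d=4$, and $A=B=\mathbb{F}_p^4\subset\mathbb{F}_q^4$. Then $|A||B|=q^4\gg q^{(d+1)/2}=q^{5/2}$, yet $|x+y|\in\mathbb{F}_p$ for all $x,y$, so $|\Delta_2(A,B)|\le q^{1/2}\ll q$; correspondingly $\sum_t\nu(t)^2\ge (|A||B|)^2/p=q^{15/2}$, which already exceeds your claimed upper bound $q^{7}+q^{11/2}$. The exponent $q^{(d+1)/2}$ in the lemma is evidently a misprint for $q^{d+1}$: the cited theorem assumes $|A||B|\gg q^{d+1}$, and the paper itself verifies $|A_l||A_{k-l}|\gg q^{d+1}$ (and $|A_1||A_2|\gg q^{d+1}$) before invoking the lemma, so nothing downstream is affected. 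You should correct the hypothesis to $|A||B|\gg q^{d+1}$ and then your argument goes through with the error term $q^{d}|A||B|$ (or simply cite \cite{Shp}); as written, the proof purports to establish a false statement.
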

Now for each positive integer $l\ge 1$ we define $A_l=\{ x^1+x^2+\dots+x^l: x^i\in A, 1\le i\le l\}$.

Let $k\ge 3$ be any integer, one can see that the $k-$distances set of $A$ can be represented as
\begin{align*}
    \Delta_k(A)=\Delta_2(A_l,A_{k-l}),\text{ for any } 1\le l\le k-1.
\end{align*}
Moreover, the lower bound for cardinality of $A_l$ can be determined via the $|A|$ and $E_{l}(A)$ as in the following lemma. This allows us to reduce our problem of determining $|A|$ such that $k-$distances set $\Delta_k(A)\gg q$ to the problem of finding conditions of $A$ and $E_{l}(A)$ such that $\Delta_2(A_l,A_{k-l})\gg q$. 

\begin{lemma}\label{cardak}
Let $d\ge 2$ and $A$ be a subset of  $\mathbb{F}_q^d$. Then for any integers $l\ge 1$, we have
\begin{align*}
    | A_l| \ge\frac{| A|^{2l}}{E_{l}(A)}.
\end{align*}
\end{lemma}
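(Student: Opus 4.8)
The plan is to run the standard Cauchy--Schwarz (equivalently, Jensen) argument on the representation function of $l$-fold sums. For $s\in\mathbb{F}_q^d$ define
\begin{align*}
    r_l(s)=\big|\{(x^1,\dots,x^l)\in A^l : x^1+\cdots+x^l=s\}\big|.
\end{align*}
By definition $r_l(s)>0$ precisely when $s\in A_l$, so the support of $r_l$ is exactly $A_l$.

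First I would record the two moment identities. Summing $r_l(s)$ over all $s$ simply counts the tuples in $A^l$, so $\sum_{s\in A_l} r_l(s)=|A|^l$. Next, $\sum_{s} r_l(s)^2$ counts pairs of $l$-tuples $(x^1,\dots,x^l)$ and $(x^{l+1},\dots,x^{2l})$ in $A^l\times A^l$ with the same sum, i.e. $x^1+\cdots+x^l=x^{l+1}+\cdots+x^{2l}$; this is precisely $E_l(A)$ by definition. (For $l=1$ both identities are trivial: $r_1=\mathbf{1}_A$, the first sum is $|A|$, the second is $|A|=E_1(A)$.)

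Then I would apply Cauchy--Schwarz over the support $A_l$:
\begin{align*}
    |A|^{2l}=\Big(\sum_{s\in A_l} r_l(s)\Big)^2\le |A_l|\sum_{s\in A_l} r_l(s)^2=|A_l|\,E_l(A),
\end{align*}
which rearranges to $|A_l|\ge |A|^{2l}/E_l(A)$, as claimed. There is no real obstacle here; the only thing to be careful about is matching the combinatorial counts to $|A|^l$ and $E_l(A)$ exactly as in the definitions given above, and noting $E_l(A)>0$ so the division is legitimate.
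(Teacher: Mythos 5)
Your proof is correct and is essentially identical to the paper's: the paper also introduces the representation function (denoted $\mu_l(y)$ there), applies Cauchy--Schwarz over its support $A_l$, and identifies the second moment with $E_l(A)$. Nothing further is needed.
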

\begin{proof}
For each $y\in \mathbb{F}_q^d$, denote $\mu_l(y)=| \{ (x^1,\dots, x^l)\in A^{l}: x^1+\dots +x^l=y\}|$. Note that $\mu_l(y)>0$ if and only if $y\in A_l$.
By the Cauchy-Schwarz inequality, we have
\begin{align*}
    | A|^{l}
    &=\sum\limits_{y\in A_l} \mu_l(y)
    \le \bigg(\sum\limits_{y\in A_l}1\bigg)^{\frac{1}{2}}\bigg(\sum\limits_{y\in A_l} \mu_l^2(y)\bigg)^{\frac{1}{2}}\\
    &=| A_l|^{\frac{1}{2}}
    \bigg[\sum\limits_{y\in A_l} 
    \bigg(\sum\limits_{\substack{x^1+\dots +x^l=y\\ x^i\in A}}1\bigg)
    \bigg(\sum\limits_{\substack{z^1+\dots +z^l=y\\ z^i\in A}}1\bigg)
    \bigg]^{\frac{1}{2}}
    =| A_l|^{\frac{1}{2}}
    \bigg(\sum\limits_{y\in A_l} 
    \sum\limits_{\substack{x^1+\dots +x^l=y=z^1+\dots +z^l\\ x^i,z^i\in A}}1\bigg)^{\frac{1}{2}}\\
    &=| A_l|^{\frac{1}{2}}
    \bigg(
    \sum\limits_{\substack{x^1+\dots +x^l=z^1+\dots +z^l\\ x^i,z^i\in A}}1\bigg)^{\frac{1}{2}}
    =| A_l|^{\frac{1}{2}} E_{l}^{\frac{1}{2}}(A),
\end{align*}
where the last line holds by definition of $l-$energy of $A$. The lemma follows.
\end{proof}

\section{Proofs of Theorems \ref{thm1} and \ref{thm2}}
To prove Theorems \ref{thm1} and \ref{thm2}, we make use of the following result due to Shkredov in \cite{Sh21}.

\begin{theorem}[Corollary 9, \cite{Sh21}]\label{thmenvarsh}
Let $V$ be a variety of $\mathbb{F}_q^d$, and $n=\dim(V)$, $D=\deg (V)$. Let $A$ be a subset of $\mathbb{F}_q^d$ such that $A\subset V$.
\begin{itemize}
    \item[(i)] If $n\ge 2$, one has
\begin{align*}
    E(A)\ll_{n,D}| A|^3\bigg(\frac{t}{| A| }\bigg)^{(2^{n+1}-n-5)^{-1}}.
\end{align*}
If $n=1$, one has $E(A)\ll_{n,D} | A|^2t$.
\item[(ii)] For any $k\ge 1$ one has either
\begin{align*}
    E_k(A)
    \le | A|^{2k-1-\frac{c\beta}{4}}
    \hspace{1cm}\text{or }\hspace{1cm}
    E_{k+1}(A)
    \ll_{n,D} | A|^{2-\frac{c\beta}{4}} E_k(A),
\end{align*}
where $\beta=\beta(n)\in [4^{-n},2^{-n+1}]$ and $0<c\le 1$.
\end{itemize}
\end{theorem}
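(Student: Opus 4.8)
The plan is to establish both parts by induction on the dimension $n=\dim V$, cutting an $n$-dimensional variety down to the $(n-1)$-dimensional case by intersecting $V$ with a generic pencil of parallel hyperplanes. The doubly exponential shape of the exponent in (i) is exactly what such a slicing produces: if $D_n$ denotes the denominator $2^{n+1}-n-5$, then $D_n=2D_{n-1}+(n+3)$, i.e.\ one loses roughly a squaring at each slicing step, starting from the base value for which the bound reads $E(A)\ll_{n,D}|A|^2 t$.

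For part (i) I would start from the identity
\begin{align*}
    E(A)=\sum_{s\in \mathbb{F}_q^d} r_A(s)^2,\qquad r_A(s):=|A\cap (A+s)|=|\{(a,a')\in A^2:\ a-a'=s\}|,
\end{align*}
and split the sum according to whether $s$ is a \emph{symmetry direction} of $V$, meaning $V+s=V$ as varieties. The symmetry directions form a linear subspace $W$; since $V$ is then a union of cosets of $W$, every point of $V$ sits inside an affine subspace $\cong W$ contained in $V$, so the size of $W$ is controlled by the flatness parameter $t$, and the bound $\sum_{s\in W}r_A(s)=|\{(a,a')\in A^2:\ a-a'\in W\}|\le |A|\,|W|$ shows that the symmetry directions contribute $\ll |A|^2|W|$, which is absorbed into the claimed estimate. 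For a direction with $V+s\neq V$ one has $A\cap(A+s)\subseteq V\cap(V+s)$, a variety of dimension at most $n-1$ and degree $\ll_{n,D}1$; decomposing the remaining $s$ dyadically by the size $\tau$ of the slice $A\cap(A+s)$, applying the inductive bound on each $(n-1)$-dimensional slice, double-counting how many $s$ can yield a slice of size $\tau$, and optimising in $\tau$ gives (i). The base case $n=1$ is pure B\'ezout: if $V+s\neq V$ then the curves $V$ and $V+s$ share no component, so they meet in $\ll_D 1$ points and $r_A(s)\ll_D 1$ for every non-symmetry $s$; moreover $V+s=V$ with $s\neq 0$ forces $V$ to contain a line (whence $t$ is already large), so $E(A)\ll_D |A|^2+|A|^2 t\ll_D |A|^2 t$.

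For part (ii) I would run the familiar energy-increment dichotomy. Write $r_k(y)=|\{(x^1,\dots,x^k)\in A^k:\ x^1+\dots+x^k=y\}|$, so that $E_k(A)=\sum_y r_k(y)^2$ and, trivially, $E_{k+1}(A)=\|r_k\ast \mathbf{1}_A\|_2^2\le |A|^2 E_k(A)$ as well as $E_k(A)\le |A|^{2k-1}$. Suppose the first alternative fails, i.e.\ $E_k(A)$ is within a small power of its trivial maximum; then the Balog--Szemer\'edi--Gowers theorem, applied to the representation function $r_k$, yields a large subset $A'\subseteq A$ of small doubling, and a Freiman/Pl\"unnecke-type structure statement traps a positive proportion of $A$ on a coset of a bounded-dimensional subspace. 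Since this trapped set still lies in $V$, the hypothesis $\dim V=n$ limits its size, the loss being measured by the growth exponent $\beta(n)\in[4^{-n},2^{-n+1}]$ coming from the same dimension induction. Substituting this structural information into the trivial bound for $E_{k+1}(A)$ through Young's inequality improves it to $E_{k+1}(A)\ll_{n,D}|A|^{2-c\beta/4}E_k(A)$, which is the second alternative; the constant $0<c\le 1$ and the precise exponent $c\beta/4$ are what one gets after tracking the BSG parameters and the dyadic pigeonholing.

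The step I expect to be the main obstacle is the aggregation inside the slicing argument for (i): one must arrange the pencil of hyperplanes so that a definite proportion of $A$ survives in each of many parallel slices while every slice stays of dimension $\le n-1$ and of bounded degree, and then combine the per-slice bounds without the naive loss of a full power of $|A|$. It is precisely the optimisation of this combination against the number of ``heavy'' slices, together with careful handling of the degenerate configurations (many translates $V+s$ equal to $V$, or $V$ reducible with a lower-dimensional component carrying most of $A$), that pins the exponent down to $(2^{n+1}-n-5)^{-1}$ and produces the $\beta(n)$-range in (ii); anything coarser yields only a weaker power.
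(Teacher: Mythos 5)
This statement is not proved in the paper at all: it is imported verbatim as Corollary 9 of Shkredov \cite{Sh21}, so there is no internal argument to compare against, and the expected ``proof'' is simply the citation. Your outline does identify the right general mechanism behind such results (the Katz--Koester-type inclusion $A\cap(A+s)\subseteq V\cap(V+s)$, the translation stabilizer of $V$ feeding the parameter $t$, and a dimension-lowering iteration responsible for the doubly exponential denominator; your recursion $D_n=2D_{n-1}+(n+3)$ for $D_n=2^{n+1}-n-5$ does check out). But as a proof it has genuine gaps at exactly the points where the content lies, and you flag the main one yourself without resolving it.

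For (i), the aggregation step is only asserted. The induction hypothesis bounds the \emph{energy} of a subset of an $(n-1)$-dimensional variety, whereas what must be bounded is $\sum_s|A\cap(A+s)|^2$, a second moment of slice \emph{sizes}; converting one into the other needs a further Cauchy--Schwarz of the shape $|A_s|^4\le |A_s-A_s|\,E(A_s)$ (or the operator/eigenvalue formulation used in \cite{Sh21}), and it is this extra squaring at each step --- not a dyadic count of how many $s$ give a slice of size $\tau$ --- that produces the exponent $(2^{n+1}-n-5)^{-1}$. There is also a characteristic issue: over $\mathbb{F}_q$ with $q$ a proper prime power, the stabilizer $\{s:V+s=V\}$ is an algebraic subgroup of $\mathbb{G}_a^d$ that need not be an $\mathbb{F}_q$-linear subspace (e.g.\ the graph of Frobenius is an additive group of size $q$ containing no line), so its size is not controlled by $t_V$ as you claim. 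For (ii), the BSG/Freiman route does not deliver the stated dichotomy: those theorems produce a large structured subset $A'\subseteq A$, not an inequality between $E_{k+1}(A)$ and $E_k(A)$ for the original set with a clean exponent $c\beta/4$; and trapping a positive proportion of $A$ in a coset $x+H$ does not force $x+H\subseteq V$, so neither $t$ nor $\dim V$ bounds $|A\cap(x+H)|$ in the way you use. The dichotomy in \cite{Sh21} comes from the higher-energy machinery, not from BSG. As it stands the proposal is a plausible roadmap, not a proof.
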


\subsection{Proof of Theorem \ref{thm1}}
In order to prove Theorem \ref{thm1}, we assume $d\ge 2$, $V\subset \mathbb{F}_q^d$ be an algebraic variety and denote $n=\dim V\ge\frac{d+1}{2}$. 

From the part $(ii)$ of Theorem \ref{thmenvarsh}, we know that for each $l\ge 2$, one of the followings holds
\begin{itemize}
    \item[(a)] $E_l(A)\ll | A|^{2l-1-c\beta/4}$ 
    \item[(b)] $E_{l+1}(A)\ll | A|^{2-c\beta/4}E_l(A)$,
\end{itemize}
where $\beta=\beta(n)\in [4^{-n}, 2^{-n+1}]$. We also know from \cite{Sh21} that there is $c\in (0,1]$ such that
\begin{align}\label{eqthm1.1}
    E_l(A)\ll | A|^{-\frac{c\beta}{2} +\frac{1}{2}+l-\frac{3\beta n}{2}+\beta l n}E_{l-1}^{\frac{1}{2}-\frac{\beta n}{2}}(A).
\end{align}
For $2\le l\le k$, denote
\begin{align*}
    \eta(l)=\max\{ j: E_{j} \text{ satisfies } (a), 2\le j\le l\}.
\end{align*}
We now consider three cases depending on the value of $\eta(l)$:\\
\textbf{Case 1.} If $\eta(l)=l$, then we have $E_l(A)\ll | A|^{2l-1-c\beta/4}$.

\textbf{Case 2.} If $\eta(l)=l-1$, then $E_{l-1}(A)\ll | A|^{2l-3-c\beta/4}$. By (\ref{eqthm1.1}), we have
\begin{align*}
    E_l(A)
    &\ll | A|^{-\frac{c\beta}{2} +\frac{1}{2}+l-\frac{3\beta n}{2}+\beta l n}E_{l-1}^{\frac{1}{2}-\frac{\beta n}{2}}(A)
    \ll | A|^{-\frac{c\beta}{2} +\frac{1}{2}+l-\frac{3\beta n}{2}+\beta l n}
    \big(| A|^{2l-3-c\beta/4}\big)^{\frac{1}{2}-\frac{\beta n}{2}}\\
    &=| A|^{2l-1-\frac{5c\beta}{8}+\frac{cn\beta^2}{8}}.
\end{align*}
\textbf{Case 3.} If $\eta(l)\le l-2$, by using $(b)$ and an inductive argument, we get
\begin{align}\label{eqthm1.2}
    E_{l}(A)\ll \big(| A|^{2-c\beta/4}\big)^{l-\eta(l)-1} E_{\eta(l)+1}(A).
\end{align}
On the other hand, it follows from (\ref{eqthm1.1}) that
\begin{align*}
    E_{\eta(l)+1}(A)
    &\ll | A|^{-\frac{c\beta}{2}+\frac{3}{2}+\eta(l)-\frac{\beta n}{2}+\beta \eta(l) n} E_{\eta(l)}^{\frac{1}{2}-\frac{\beta n}{2}}(A)\\
    &\ll | A|^{-\frac{c\beta}{2}+\frac{3}{2}+\eta(l)-\frac{\beta n}{2}+\beta \eta(l) n} \big( | A|^{2\eta(l)-1-c\beta/4}  \big)^{\frac{1}{2}-\frac{\beta n}{2}}\\
    &=| A|^{2\eta(l)+1-\frac{5c\beta}{8}+\frac{cn\beta^2}{8}}.
\end{align*}
Substituting this into (\ref{eqthm1.2}), one has
\begin{align*}
    E_l(A) 
    &\ll \big(| A|^{2-c\beta/4}\big)^{l-\eta(l)-1} | A|^{2\eta(l)+1-\frac{5c\beta}{8}+\frac{cn\beta^2}{8}}\\
    &= | A|^{2l-1+\frac{c\eta(l)\beta}{4}-\frac{c\beta l}{4}-\frac{3c\beta}{8}+\frac{cn\beta^2}{8}}
\end{align*}
Putting these three cases above together and using the fact that
\begin{align*}
    \min\bigg\{\frac{c\beta}{4},\frac{c\beta}{4}\bigg(\frac{5}{2}-\frac{n\beta}{2}\bigg),\frac{c\beta}{4}\bigg(l-\eta(l)+\frac{1}{2}(3-n\beta)\bigg)\bigg\}=\frac{c\beta}{4},
\end{align*}
one has
\begin{align*}
    E_l(A)\ll | A|^{2l-1-\frac{c\beta}{4}}, \forall l\ge 2.
\end{align*}
Hence, it follows from Lemma \ref{cardak} that
\begin{align*}
     | A_l|  \ge \frac{| A|^{2l}}{E_l(A)}\ge \frac{| A|^{2l}}{| A|^{2l-1-\frac{c\beta}{4}}}=| A|^{1+\frac{c\beta}{4}},
\end{align*}
which implies
\begin{align*}
    | A_l| | A_{k-l}| 
    \ge | A|^{2+\frac{c\beta}{2}}.
\end{align*}
To complete the proof, we observe that $\Delta_k(A)=\Delta_2(A_l,A_{k-l})$. So, it is sufficient to show that $| \Delta_2(A_l,A_{k-l})|\gg q$ under the condition $| A| \gg q^{\frac{d+1}{2}-\epsilon}$ for some $\epsilon>0$.\\
Indeed, choose $\epsilon=\frac{(d+1)c\beta}{2(4+c\beta)}>0$, and assume that $| A|\gg q^{\frac{d+1}{2}-\epsilon}=q^{\frac{d+1}{2+\frac{c\beta}{2}}}$, then we can check that
\begin{align*}
    | A_l| | A_{k-l}| 
    \ge q^{\frac{d+1}{2+\frac{c\beta}{2}}(2+\frac{c\beta}{2})}\gg q^{d+1}.
\end{align*}
Applying Theorem $\ref{twosets}$ for two sets $A_l$ and $A_{k-l}$, one obtains $| \Delta_k(A)| =| \Delta_2(A_l,A_{k-l})| \gg q$.
\begin{flushright}
$\Box$
\end{flushright}

\subsection{Proof of Theorem \ref{thm2}}
Let $d\ge 2$ and $V\subset \mathbb{F}_q^d$ be a variety with $n=\dim V\ge \frac{d+1}{2}$. Denote $t_V=q^\alpha$, where $0\le \alpha\le \frac{d+1}{2}$. Also, set $\gamma=(2^{n+1}-n-5)^{-1}$. 
 
Let $A$ be a subset of $V$.
\begin{enumerate}
    \item[(i)] Assume $k=3$ and $| A|\gg q^{\frac{d+1}{2}-\frac{\gamma((d+1)-2\alpha)}{2(2+\gamma)}}$. \\
    Set $A_1=A$ and $A_2=A+A$. Applying Lemma \ref{cardak} and the part $(i)$ of Theorem \ref{thmenvarsh}, we get
    \begin{align*}
        | A_1| | A_2|
        &\ge | A|\frac{| A|^4}{E_2(A)}
        \gg \frac{| A|^5}{| A|^{3-\gamma}t^\gamma}
        \gg | A|^{2+\gamma}q^{-\alpha\gamma}
        \gg q^{\big(\frac{d+1}{2}-\frac{\gamma((d+1)-2\alpha)}{2(2+\gamma)}\big)(2+\gamma)}q^{-\alpha\gamma}\\
        &= q^{\frac{1}{2}\big((d+1)(2+\gamma)-\gamma(d+1)\big)+\alpha\gamma}q^{-\alpha\gamma}=q^{d+1}.
    \end{align*}
    Hence, Theorem \ref{twosets} implies $| \Delta_3(A)|=| \Delta_2(A_1,A_2)|\gg q$.
    
    \item[(ii)] Assume $k=4$ and $A$ satisfies
$| A| \gg q^{\frac{d+1}{2}-\frac{\gamma((d+1)-2\alpha)}{2(1+\gamma)}}$.\\
This case is proved in the same way with $A_1=A_2=A+A$.
\begin{flushright}
$\Box$
\end{flushright}
\end{enumerate}

\section{Proofs of Theorems \ref{thm3} and \ref{thm4}}

Let $d\ge 3$, $k\ge 3$ be integers, $j\in \mathbb{F}_q^\ast$. Based on results given by Covert, Koh, Pi \cite{Co} and Hieu, Pham \cite{HP17} as mentioned in the introduction, to prove Theorems \ref{thm3} and \ref{thm4}, it suffices to assume that $|A|\le  q^{\frac{d-1}{2}+\frac{1}{k-1}}$.

We begin by giving an estimate for energies of sets on spheres.
\begin{lemma}\label{energyk}
Let $d\ge 3, l\ge 2$ be integers and $j$ be an nonzero element in $\mathbb{F}_q$. Assume that one of the followings holds:
\begin{itemize}
    \item[(i)] $d$ is even.
    \item[(ii)] $d=4k'-1$ and $q=1\mod 4$ or $d=4k'+1$; $j$ is a primitive element in $\mathbb{F}_q^*$.
\end{itemize}
If $A$ be a subset of $S_j^{d-1}$ such that $|A|> q^{\frac{d-1}{2}}$, then
    \begin{align*}
        E_{l}(A)\ll q^{\frac{(d-1)(2l-3)-1}{2}}|A|^2+q^{(d-1)(l-2)-1}| A|^3 +q^{-1}| A|^{2l-1}.
    \end{align*}
In particular, if $q^{\frac{d-1}{2}}< |A|\le q^{\frac{d}{2}}$, then
\begin{align*}
        E_{l}(A)\ll q^{\frac{(d-1)(2l-3)-1}{2}}|A|^2 +q^{-1}| A|^{2l-1}.
\end{align*}
\end{lemma}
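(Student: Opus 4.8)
The plan is to prove the estimate for $E_l(A)$ by induction on $l$, using Lemma \ref{energykinduction} as the inductive engine and Lemma \ref{energydeven} (in the even-dimensional case) or Lemma \ref{energydood} (in the odd-dimensional case) as the base case $l=2$. First I would record the base case: for $l=2$, $E_2(A)=E(A)\ll \frac{|A|^3}{q}+q^{\frac{d-2}{2}}|A|^2$ by Lemma \ref{energydeven} or \ref{energydood}, and one checks that $q^{\frac{d-2}{2}}=q^{\frac{(d-1)(2\cdot 2-3)-1}{2}}$ and $q^{(d-1)(2-2)-1}=q^{-1}$, so the claimed bound holds at $l=2$ (the middle term $q^{(d-1)(l-2)-1}|A|^3$ coincides with $q^{-1}|A|^3$, which is dominated by $q^{-1}|A|^{2l-1}$ since $|A|\ge 1$; alternatively it is there to keep the induction clean). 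Then I would assume the bound for $E_{l-1}(A)$ and apply Lemma \ref{energykinduction}: $E_l(A)\ll q^{d-1}E_{l-1}(A)+\frac{|A|^{2l-1}}{q}$. Substituting the inductive hypothesis,
\begin{align*}
q^{d-1}E_{l-1}(A)\ll q^{d-1}\Big(q^{\frac{(d-1)(2l-5)-1}{2}}|A|^2+q^{(d-1)(l-3)-1}|A|^3+q^{-1}|A|^{2l-3}\Big),
\end{align*}
and one computes $q^{d-1}\cdot q^{\frac{(d-1)(2l-5)-1}{2}}=q^{\frac{(d-1)(2l-3)-1}{2}}$ and $q^{d-1}\cdot q^{(d-1)(l-3)-1}=q^{(d-1)(l-2)-1}$, giving exactly the first two terms of the target; the third contribution $q^{d-2}|A|^{2l-3}$ must then be absorbed, together with $\frac{|A|^{2l-1}}{q}$, into the stated bound.

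The remaining point is therefore purely bookkeeping on which term dominates. The term $q^{d-2}|A|^{2l-3}$ needs to be shown $\ll$ one of the three terms on the right-hand side. Comparing with the first term $q^{\frac{(d-1)(2l-3)-1}{2}}|A|^2$: this requires $q^{d-2}|A|^{2l-5}\ll q^{\frac{(d-1)(2l-3)-1}{2}}$, i.e. $|A|^{2l-5}\ll q^{\frac{(d-1)(2l-3)-1}{2}-(d-2)}=q^{\frac{(d-1)(2l-5)+1}{2}}$, which follows from the hypothesis $|A|>q^{\frac{d-1}{2}}$ only in the reverse direction — so instead one compares $q^{d-2}|A|^{2l-3}$ with $q^{-1}|A|^{2l-1}$, which needs $q^{d-1}\ll |A|^2$, i.e. exactly $|A|\gg q^{\frac{d-1}{2}}$, the standing hypothesis. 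Hence $q^{d-2}|A|^{2l-3}\ll q^{-1}|A|^{2l-1}$ and it is absorbed. This closes the induction and gives the first displayed bound.

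For the "in particular" clause, I would simply feed in the extra restriction $|A|\le q^{\frac{d}{2}}$ to show the middle term $q^{(d-1)(l-2)-1}|A|^3$ is dominated by the first term $q^{\frac{(d-1)(2l-3)-1}{2}}|A|^2$: this amounts to $q^{(d-1)(l-2)-1}|A|\ll q^{\frac{(d-1)(2l-3)-1}{2}}$, i.e. $|A|\ll q^{\frac{(d-1)(2l-3)-1}{2}-(d-1)(l-2)+1}=q^{\frac{d-1}{2}}$; this is slightly off — the correct comparison uses $|A|\le q^{d/2}$ to get $q^{(d-1)(l-2)-1}|A|^3\ll q^{(d-1)(l-2)-1+d/2}|A|^2$ and one checks $(d-1)(l-2)-1+\tfrac{d}{2}\le \tfrac{(d-1)(2l-3)-1}{2}$ reduces to $\tfrac{d}{2}\le \tfrac{d-1}{2}+ \tfrac12$, i.e. equality, so the middle term is $\ll$ the first term and drops out, leaving the stated two-term bound. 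The main obstacle, such as it is, is getting these exponent arithmetic comparisons exactly right and confirming that every discarded term is genuinely dominated under the hypothesis $q^{\frac{d-1}{2}}<|A|$ (and, for the refinement, $|A|\le q^{d/2}$); there is no analytic difficulty beyond Lemmas \ref{energydeven}, \ref{energydood} and \ref{energykinduction}, which do all the real work.
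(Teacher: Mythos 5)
Your proposal is correct and follows essentially the same route as the paper: the paper unrolls the recursion of Lemma \ref{energykinduction} down to $E_2(A)$ and controls the accumulated error by summing the geometric series $\sum_{i}\bigl(q^{d-1}/|A|^2\bigr)^i\ll 1$ under $|A|>q^{\frac{d-1}{2}}$, which is the same absorption you perform term by term in your induction, and both arguments then insert the $E_2$ bound from Lemmas \ref{energydeven}/\ref{energydood}. Your exponent bookkeeping (after the self-correction in the last paragraph) checks out, including the observation that the middle term is dominated by the first exactly when $|A|\le q^{d/2}$.
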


\begin{proof}
Assume $d\ge 3$, $l\ge 2$ and $j\in \mathbb{F}_q^*$ which satisfy hypotheses of Lemma \ref{energyk}. 

Let $A$ be a subset of $S_j^{d-1}$, then from Lemma \ref{energykinduction}, we have
\begin{align*}
    E_{l}(A)\ll q^{d-1} E_{l-1}(A)+q^{-1}|A|^{2l-1}.
\end{align*}
Using this and an inductive argument, one obtains
\begin{align*}
    E_{l}(A)\ll q^{(d-1)(l-2)}E_2(A)+q^{-1}|A|^{2l-1}\sum\limits_{i=0}^{l-3}\bigg(\frac{q^{d-1}}{|A|^2}\bigg)^i.
\end{align*}
In addition, the condition $|A|> q^{\frac{d-1}{2}}$ implies that
\begin{align}\label{eqenergyk2}
    E_{l}(A)\ll q^{(d-1)(l-2)}E_2(A)+q^{-1}|A|^{2l-1}.
\end{align}
The first statement of Lemma \ref{energyk} follows from Theorem \ref{energydeven} and Theorem \ref{energydood}.

Now assume that $|A|\le q^{\frac{d}{2}}$. By Theorem \ref{energydeven} and Theorem \ref{energydood}, we can check that
\begin{align*}
    E_2(A)\ll  q^{-1}| A|^3+q^{\frac{d-2}{2}}| A|^2\ll q^{\frac{d-2}{2}}| A|^2.
\end{align*}
Substituting this into (\ref{eqenergyk2}), we complete the proof of Lemma \ref{energyk}.
\end{proof}

\subsection{Proof of Theorem \ref{thm3}}
Suppose $d\ge 4$ is even. 
\begin{enumerate}
    \item[(i)] For $k>3$, we assume $A$ satisfies
\begin{align}\label{eqthm3.1}
    C_kq^{\frac{d-1}{2}+\frac{1}{4(k-2)}}\le | A|\le q^{\frac{d-1}{2}+\frac{1}{k-1}},\hspace{1cm}\text{where } C_k>0.
\end{align}
It follows from Lemma \ref{cardak}, Theorem \ref{energyk} and (\ref{eqthm3.1}) that there exists $c_1>0$ such that
\begin{align*}
    | A_{k-1}| 
    &\ge \frac{|A|^{2(k-1)}}{E_{k-1}(A)}
    \ge c_1\frac{|A|^{2(k-1)}}{q^{\frac{(d-1)(2(k-1)-3)-1}{2}}|A|^2 +q^{-1}| A|^{2(k-1)-1}}\\
    &\ge \frac{c_1}{2}\frac{|A|^{2k-2}}{\max\big\{q^{\frac{(d-1)(2k-5)-1}{2}}|A|^2 ,q^{-1}| A|^{2k-3}\big\}}\\
    &\ge \frac{c_1}{2}\min\big\{|A|^{2k-4}q^{\frac{-(d-1)(2k-5)+1}{2}},|A|q\big\}\\
    &\ge \frac{c_1}{2}\min\big\{\big(C_kq^{\frac{d-1}{2}+\frac{1}{4(k-2)}}\big)^{2k-4}q^{\frac{-(d-1)(2k-5)+1}{2}},C_kq^{\frac{d-1}{2}+\frac{1}{4(k-2)}}q\big\}\\
    &\ge\frac{c_1\min\{C_k^{2k-4},C_k\}}{2}\min\big\{q^{\frac{d+1}{2}},q^{\frac{d+1}{2}+\frac{1}{4(k-2)}}\big\}
    =c_2q^{\frac{d+1}{2}},
\end{align*}
where we put $c_2=\frac{c_1\min\{C_k^{2k-4},C_k\}}{2}$.

Combining this with (\ref{eqthm3.1}) one gets
\begin{align*}
    |A||A_{k-1}|
    &\ge \frac{c_1\min\{C_k^{2k-3},C_k^2\}}{2}q^{\frac{d-1}{2}+\frac{1}{4(k-2)}}q^{\frac{d+1}{2}}> c_3q^d,
\end{align*}
where $c_3=\frac{c_1\min\{C_k^{2k-3},C_k^2\}}{2}$.

Choosing $C_k>0$ such that $c_2>2$ and $c_3>16$, we have the following lower bounds:
\begin{align*}
    | A_{k-1}| > 2q^{\frac{d+1}{2}},\hspace{0.5cm}| A|| A_{k-1}| >16 q^d.
\end{align*}
Now since $\Delta_k(A) = \Delta_2(A,A_{k-1})$, applying Theorem \ref{devenAB2} for two set $A$ and $B=A_{k-1}$, we obtains
\begin{align*}
    | \Delta_k(A)| =| \Delta_2(A,A_{k-1})| \gg q.
\end{align*}

\item[(ii)]  For $k=3$, we assume
\begin{align}\label{eqthm3.2}
    C_3q^{\frac{d}{2}-\frac{1}{4}}\le | A|\le q^{\frac{d}{2}},\hspace{1cm}\text{where } C_3>0.
\end{align}
In the same way, apply Lemma \ref{cardak}, Theorem \ref{energydeven} and (\ref{eqthm3.2}), there is $c_1'>0$ such that
\begin{align*}
    | A_2|
    &\ge\frac{| A|^4}{E_2(A)}
    \ge c_1'\frac{| A|^4}{\ds\frac{| A|^3}{q}+q^{\frac{d-2}{2}}| A|^2}\
    \ge \frac{c_1'}{2}\frac{| A|^2}{q^{\frac{d-2}{2}}}\\
    &\ge\frac{c_1'}{2}C_3^2 \big(q^{\frac{d}{2}-\frac{1}{4}}\big)^2 q^{-\frac{d-2}{2}}=\frac{c_1'}{2}C_3^2 q^{\frac{d+1}{2}},
\end{align*}
The third inequality holds since $|A|\leq q^{\frac{d}{2}}$.

Moreover, using (\ref{eqthm3.2}) again, we have
\begin{align*}
    |A||A_2|\ge \frac{c_1'}{2}C_2^3q^{\frac{d}{2}-\frac{1}{4}}q^{\frac{d+1}{2}}>\frac{c_1'}{2}C_2^3q^{d}.
\end{align*}
Choose $C_3>0$ such that $\frac{c_1'}{2}C_3^2>2$ and $\frac{c_1'}{2}C_2^3>16$, then from Theorem \ref{devenAB2}, we deduce that
\begin{align*}
    |\Delta_3(A)| =| \Delta_2(A,A_2)| \gg q.
\end{align*}
\begin{flushright}
$\Box$
\end{flushright}
\end{enumerate}

\subsection{Proof of Theorem \ref{thm4}}
Suppose $d\ge 3$ is an odd integer, either $d=4l+1,l\in \N$ or $d=4l-1$ and $q=1\mod 4$, and $j$ is a primitive element in $\mathbb{F}_q^*$. Note that from this assumption, then $j$ is not a square in $\mathcal{F}_q$.

\begin{enumerate}
    \item[(i)] For $k>3$, assume $A$ satisfies
\begin{align}\label{eqthm4.1}
    C_k'q^{\frac{d-1}{2}+\frac{1}{2(2k-3)}}\le | A|\le  q^{\frac{d-1}{2}+\frac{1}{(k-1)}},
\end{align}
where $C_k'>0$ will be chosen later.

By the same method, from Lemma \ref{cardak}, Theorem \ref{energyk} and (\ref{eqthm4.1}) one can check that
\begin{align*}
    | A| | A_{k-1}|
    &\ge\frac{| A|^{2(k-1)+1}}{E_{k-1}(A)}\\
    &\ge c_1\frac{| A|^{2(k-1)+1}}{q^{\frac{(d-1)(2(k-1)-3)-1}{2}}|A|^2 +q^{-1}| A|^{2(k-1)-1}}\\
    &\ge \frac{c_1}{2} \min\big\{|A|^{2k-3}q^{\frac{-(d-1)(2k-5)+1}{2}},|A|^2q\big\}\\
    &\ge \frac{c_1}{2} \min\big\{\big(C_k'q^{\frac{d-1}{2}+\frac{1}{2(2k-3)}}\big)^{2k-3} q^{\frac{-(d-1)(2k-5)+1}{2}}, \big(C_k'q^{\frac{d-1}{2}+\frac{1}{2(2k-3)}}\big)^2q\big\}
    \\
    &\ge \frac{c_1\min\{(C_k')^{2k-3},(C_k')^2\}}{2}q^d.
\end{align*}
Choose $C_k'>0$ such that $| A| | A_{k-1}|\ge 4 q^d$. Then, it follows from Theorem \ref{doodAB} that
\begin{align*}
    | \Delta_k(A)| =| \Delta_2(A,A_{k-1})| \gg q.
\end{align*}

\item[(ii)] For $k=3$, assume $A$ satisfies
\begin{align}\label{eqthm4.2}
    C_3'q^{\frac{d}{2}-\frac{1}{3}}\le | A|\le  q^{\frac{d}{2}}, \text{ where } C_3'>0.
\end{align}
In the same way, applying Lemma \ref{cardak}, Theorem \ref{energydood}, and (\ref{eqthm4.2}), one obtains
\begin{align*}
    | A| | A_2| 
    &\ge| A|\frac{| A|^4}{E_2(A)}\ge \frac{c_1'}{2}C_3'^3 q^{\frac{3d}{2}-1} q^{-\frac{d}{2}+1}
    =\frac{c_1'}{2}C_3'^3 q^{d},
\end{align*}
for some constant $c_1'>0$.

Choose $C_3'>0$ such that $|A|| A_2|\ge 4q^d$.
We complete the proof by applying Theorem \ref{doodAB}.
\begin{flushright}
$\Box$
\end{flushright}
\end{enumerate}

\section*{Acknowledgments}

I would like to thanks Doowon Koh, Thang Pham, Chun-Yen Shen for useful discussions and comments.

\end{document}